\documentclass{amsart}
\usepackage[margin=1 in]{geometry}
\usepackage[T1]{fontenc}							
\usepackage[english]{babel}							
\usepackage[utf8]{inputenc}							
\usepackage{amsmath,amsthm, amsfonts,amsrefs,amssymb, enumerate}
\usepackage{mathrsfs}
\usepackage{mathtools}
\mathtoolsset{mathic} 
\usepackage{tikz}

\usepackage{bm} 
\usepackage{bbm} 
\usepackage{mathrsfs} 
\usepackage{dsfont}
\usepackage{enumitem}
\usepackage{bbm}

\newtheorem{theorem}{Theorem}[section]
\newtheorem{lemma}[theorem]{Lemma}

\theoremstyle{definition}

\newtheorem{remark}[theorem]{Remark} 

\definecolor{pink}{rgb}{1,0,1}
\def\beq{\begin{eqnarray}}
\def\eeq{\end{eqnarray}}

\newcommand*{\R}{\mathbb{R}}
\newcommand*{\N}{\mathbb{N}}
\newcommand*{\C}{\mathbb{C}}

\newcommand*{\supp}{\text{supp}}

\usepackage{graphicx}								
\usepackage{subfig}									
\usepackage{listings}								
\usepackage{eso-pic}								

\usepackage{float} 									

\usepackage[labelfont=bf, textfont=normal,
			justification=justified,
			singlelinecheck=false]{caption}

\usepackage{hyperref}								
\hypersetup{colorlinks, citecolor=black,
   		 	filecolor=black, linkcolor=black,
    		urlcolor=black}

\usepackage[textsize=tiny]{todonotes}   

\usepackage{longtable}

\title[Exponential Bounds in the Neumann Heat Trace of Polygons]{
Beyond Three Terms: Exponential Bounds in the Neumann Heat Trace of Polygons}
\author[G.\@ M\aa rdby]{Gustav M\aa rdby}
\address{Mathematical Sciences, Chalmers University of Technology and University
of Gothenburg, 412 96 Gothenburg, Sweden}
\email{mardby@chalmers.se}

\begin{document}
\maketitle
\begin{abstract}
We study the short-time asymptotic behavior of the heat trace associated with the Neumann Laplacian on polygonal domains in $\R^2$. By establishing locality principles for the heat kernel near corners, edges, and the interior, we approximate the heat kernel on the polygon by model heat kernels defined on infinite sectors, half-planes, and the full plane, respectively. Although it is known that the Neumann heat trace of polygons admits a three-term asymptotic expansion followed by an exponentially small remainder, an explicit estimate for the exponent in this remainder term is not known. In this article, we provide such an estimate. 
We also discuss whether the exponent is sharp, and how it relates to known results. Finally, we discuss issues that arise when trying to extend the results to Robin boundary conditions.
\end{abstract}

\section{Introduction} \label{s:intro}
Let $\Omega \subset \R^2$ be a bounded domain in the Euclidean plane. 
The classical Laplace eigenvalue equation is to find all functions $u : \Omega \to \C$ and $\lambda \in \C$ such that
\begin{equation} \label{eq:eigenvalue}
    \Delta u = \lambda u \text{ in } \Omega.
\end{equation}
Here, we use the following sign convention for the Laplace operator:
\begin{equation*}
    \Delta = -\frac{\partial^2}{\partial x_1^2} -\frac{\partial^2}{\partial x_2^2}.
\end{equation*}
The eigenvalue equation plays a central role in both mathematics and physics. This should not be too surprising, since the Laplace operator appears in several fundamental evolution equations, such as the heat, wave, and Schrödinger equations. To obtain a well-posed spectral problem, one equips \eqref{eq:eigenvalue} with a boundary condition. The most classical choice is probably the \textit{Dirichlet boundary condition},
\begin{equation*}
    u|_{\partial\Omega} = 0,
\end{equation*}
which, depending on the underlying physical model, represents, for example, a membrane fixed along its boundary in the case of the wave equation, or a boundary held at zero temperature in the case of the heat equation. The solutions $u = u_k$ of \eqref{eq:eigenvalue} that satisfy the Dirichlet boundary condition and are not identically zero are called \textit{eigenfunctions}, and the corresponding $\lambda = \lambda_k$ are the \textit{eigenvalues}. The collection of all eigenvalues, counted with multiplicity, forms the \textit{spectrum} of the Laplacian on $\Omega$. The spectrum plays a fundamental role in physics as it, among other things, describes the resonant frequencies of vibrating membranes, the rate at which heat diffuses through a medium, and the energy levels of quantum systems \cite{kac1966can}. Any quantity that can be recovered from the spectrum is called a \textit{spectral invariant}. When $\Omega$ is bounded and the Laplacian is equipped with the Dirichlet boundary condition, the spectrum is discrete and positive, forming a sequence
\begin{equation*}
    0 < \lambda_1 \leq \lambda_2 \leq \lambda_3 \leq \dots \uparrow \infty.
\end{equation*}
Moreover, the associated normalized eigenfunctions $\{u_k\}_{k=1}^\infty$ form an orthonormal basis of $L^2(\Omega)$ \cite[Thm. 2.1.20]{levitin2023topics}. 

It is natural to ask to what extent the spectrum determines the geometry of the domain. In other words, what geometric features are and are not spectral invariants? Is perhaps all of the geometry a spectral invariant? In the planar case, this idea was popularized by Mark Kac’s celebrated question from 1966, \textit{``Can one hear the shape of a drum?''} \cite{kac1966can}. His question initiated a massive amount of research on the subject; see \cite{maardby2026century} for an overview. Although the answer to Kac's original question has been known for over 30 years \cite{gordon1992isospectral}, many related questions remain open.

One powerful tool for obtaining spectral invariants is the so-called \textit{heat trace}. To see how it arises, consider the heat equation with the Dirichlet boundary condition:
\begin{equation} \label{eq:heat_equation}
    \begin{cases}
        (\frac{\partial}{\partial t} + \Delta)u(t,x) = 0 \text{ in } (0,\infty) \times \Omega, \\
        u(t,x) = 0 \text{ on } (0,\infty) \times \partial \Omega, \\
        u(0,x) = f(x) \in L^2(\Omega) \text{ in } \Omega.
    \end{cases}
\end{equation}
The fundamental solution of \eqref{eq:heat_equation} is known as the \textit{heat kernel} $H_\Omega : (0,\infty) \times \Omega \times \Omega \to \R$. This means that, when $\Delta$ acts in either $x$ or $y$, $H_\Omega(t,x,y)$ satisfies
\begin{align*}
    \bigg(\frac{\partial}{\partial t} + \Delta\bigg) H_\Omega(t,x,y) = 0&, \\
    H_\Omega(t,x,y) = H_\Omega(t,y,x)&, \\
    \lim_{t \downarrow 0} H_\Omega(t,x,y) = \delta(x-y)& \text{ in the sense of distributions},
\end{align*}
and the heat equation \eqref{eq:heat_equation} has the solution
\begin{equation*}
    u(t,x) = \int_\Omega H_\Omega(t,x,y) f(y) dy.
\end{equation*}
Moreover, if $\{\lambda_k\}_{k=1}^\infty$ and $\{u_k\}_{k=1}^\infty$ are the Dirichlet eigenvalues and normalized eigenfunctions, respectively, then we also have (see \cite[Thm. 6.1.2]{levitin2023topics})
\begin{equation*}
    H_\Omega(t,x,y) = \sum_{k=1}^\infty e^{-\lambda_k t} \, u_k(x)u_k(y), \,\,x,y \in \Omega, \,\, t > 0.
\end{equation*}
This series converges absolutely on $(0,\infty) \times \Omega \times \Omega$ and extends smoothly to $t = 0$ off the diagonal $x = y$.
Integrating the heat kernel along the diagonal yields the spectral invariant known as the \textit{heat trace}:
\begin{equation*}
    Z(t) := \int_\Omega H_\Omega(t,x,x)\,dx = \sum_{k=1}^\infty e^{-\lambda_k t}, \,\, t > 0.
\end{equation*}
Although this series diverges as $t \downarrow 0$, it converges uniformly for $t \geq T$ for any $T > 0$. This can be shown using \textit{Weyl's law} \cite{weyl1912asymptotische}, which says that the Dirichlet eigenvalues for any bounded domain $\Omega$ in the plane have the asymptotic growth rate
\begin{equation} \label{eq:Weyls_law}
    \lambda_k \sim \frac{4\pi k}{|\Omega|}, \,\, k \to \infty,
\end{equation}
where $|\Omega|$ denotes the area of $\Omega$. For rectangles, \eqref{eq:Weyls_law} follows directly from the explicit eigenvalue expressions. More generally, if $\Omega$ has piecewise smooth boundary, Weyl's law can be proved by approximating $\Omega$ from inside and outside by finite unions of squares and applying domain monotonicity (see e.g. \cite[Thm. 2.6]{maardby2026century}). For arbitrary bounded domains, Weyl's law was established by Rozenblum \cite{rozenbljum1972eigenvalues}.

Using \eqref{eq:Weyls_law}, one can show that (see \cite[pages 29-30]{maardby2026century})
\begin{equation*}
    Z(t) \sim \frac{|\Omega|}{4\pi t}, \,\, t \downarrow 0.
\end{equation*}
Building on Weyl’s law, Pleijel \cite{pleijel1954study} refined the heat trace asymptotics using Green’s functions and methods due to Carleman to show that
\begin{equation*}
    Z(t) \sim \frac{|\Omega|}{4\pi t} - \frac{|\partial\Omega|}{8\sqrt{\pi t}}, \,\, t \downarrow 0,
\end{equation*}
for any smoothly bounded planar domain, where $|\partial\Omega|$ denotes the perimeter of $\Omega$. In 1967, McKean and Singer \cite{mckean1967curvature} extended this further in the setting of compact Riemannian manifolds by obtaining a third, topological term in the expansion. By relating the heat trace coefficients to the curvature tensor and carefully manipulating certain Levi sums, they showed that 
\begin{equation} \label{eq:heat_trace_expansion_three_terms}
    Z(t) \sim \frac{|\Omega|}{4\pi t} - \frac{|\partial\Omega|}{8\sqrt{\pi t}} + \frac{\chi(\Omega)}{6}, \,\, t \downarrow 0.
\end{equation}
This implies that, in addition to the area and perimeter, the Euler characteristic, or more intuitively the number of holes, is a spectral invariant for smooth planar domains. It is now known that the heat trace of any smoothly bounded domain with the Dirichlet boundary condition has an asymptotic expansion as $t \downarrow 0$ of the form
\begin{equation*}
    Z(t) = \frac{a_{-1}}{t} + \frac{a_{-1/2}}{\sqrt{t}} + a_0 + a_{1/2}\sqrt{t} + a_1t + \dots,
\end{equation*}
where each coefficient $a_{j/2}$, can be expressed as the sum of two integrals: one over $\Omega$ involving a universal polynomial in the Gauss curvature $K$ of the metric and its covariant derivatives, and one over $\partial\Omega$ involving another universal polynomial in the boundary curvature $\kappa$ and its derivatives. However, explicit formulas for higher-order coefficients become increasingly complicated. See \cite{watanabe2000plane} for further terms.

The three-term asymptotic expansion \eqref{eq:heat_trace_expansion_three_terms} had already been conjectured by Kac \cite{kac1966can} one year before McKean and Singer. By approximating smooth domains with polygons, Kac heuristically obtained the same expansion, though his arguments were not rigorous. Nevertheless, this motivates the study of heat trace asymptotics of polygonal domains. To the best of our knowledge, this was first studied by Brownell in 1957 \cite{brownell1957improved}, who conjectured that polygons only have three heat trace terms followed by exponential decay. A complicated proof, with several missing details, of this is given in 
\cite{bailey1961removal1}. A complete and detailed derivation was eventually given by van den Berg and Srisatkunarajah in 1988 \cite{van1988heat}, who showed that for a polygon $\Omega$, 
\begin{equation} \label{eq:Dirichlet_final_bound}
    Z(t) = \frac{|\Omega|}{4\pi t} - \frac{|\partial\Omega|}{8\sqrt{\pi t}} + \sum_{i=1}^n \frac{\pi^2 - \gamma_i^2}{24\pi\gamma_i} + \mathcal{O}(e^{-R^2\sin^2(\gamma/2)/(16t)}), \,\, t \downarrow 0,
\end{equation}
where $\gamma_1,\dots,\gamma_n$ are the interior angles, and $\gamma$ and $R$ are defined in \eqref{eq:def_gamma} and \eqref{eq:def_R} below. Thus, in contrast to smooth domains, which typically have infinitely many non-zero heat trace terms, polygons have only three, followed by exponential decay. Moreover, the third heat trace term changes from $\frac{\chi(\Omega)}{6}$, which equals $\frac{1}{6}$ for simply connected domains, to
\begin{equation*}
    \sum_{i=1}^n \frac{\pi^2 - \gamma_i^2}{24\pi\gamma_i},
\end{equation*}
which can be shown to always be strictly greater than $\frac{1}{6}$ (see e.g. \cite[Thm. 6.4]{maardby2025spectral}). Van den Berg and Srisatkunarajah's proof of \eqref{eq:Dirichlet_final_bound} relies on expressing the Green's function for an infinite sector as a Kontorovich-Lebedev transform, in combination with giving a probabilistic representation of the heat kernel in terms of Brownian motion, a technique well suited for the Dirichlet boundary condition. 

In this article, we study the Laplace operator on polygons subject to the \textit{Neumann boundary condition}, arguably the second most natural boundary condition after Dirichlet. This requires the normal derivative of the eigenfunctions to vanish at the boundary,
\begin{equation*}
    \frac{\partial u}{\partial n}\bigg|_{\partial\Omega} = 0.
\end{equation*}
In particular, this boundary condition models an insulated boundary in the heat equation, where no heat flows across the boundary. However, in the case of polygons, the boundary is not everywhere smooth, and the normal is not defined at the corners. This is not a big problem, since the set of corner points is small. 
More precisely, the issue can be rigorously resolved by instead interpreting the Neumann problem in the weak sense. Namely, one seeks all $u \in H^1(\Omega)$, not identically zero, and $\lambda \in \mathbb{C}$ such that
\begin{equation*}
    \langle \nabla u, \nabla v\rangle_{L^2(\Omega)} = \lambda \langle u, v \rangle_{L^2(\Omega)} \text{ for all } v \in H^1(\Omega).
\end{equation*}
By the weak spectral theorem for the Neumann Laplacian, there is an orthonormal basis for $L^2(\Omega)$ consisting of weak eigenfunctions, and the corresponding eigenvalues form a discrete non-negative increasing sequence tending to infinity. Furthermore, elliptic regularity implies that the weak eigenfunctions are in fact smooth in $\Omega$, and smooth up to the smooth parts of the boundary, where they satisfy the Neumann condition in the classical sense. These results hold more generally for any bounded domain with Lipschitz boundary. See e.g. \cite[Ch. 2]{levitin2023topics} for details.

Thus, just as in the Dirichlet case, the Neumann Laplacian on a polygon has discrete spectrum, this time starting with zero, 
\begin{equation*}
    0 = \lambda_0 < \lambda_1 \leq \lambda_2 \leq \dots \uparrow \infty,
\end{equation*}
and a well defined heat trace
\begin{equation*}
    Z(t) = \sum_{k=0}^\infty e^{-\lambda_k t}, \,\, t > 0.
\end{equation*}
It is therefore natural to ask: What does the short-time asymptotic expansion of the heat trace look like in this setting? Kokotov \cite[Thm. 1]{kokotov2010spectral} and Fursaev~\cite{fursaev1994heat} indirectly showed that the heat trace of the Neumann Laplacian on a polygon satisfies 
\begin{equation} \label{eq:heat_trace_expansion_polygon_Neumann}
    Z(t) = \frac{|\Omega|}{4\pi t} + \frac{|\partial\Omega|}{8\sqrt{\pi t}} + \sum_{i=1}^n \frac{\pi^2 - \gamma_i^2}{24\pi\gamma_i} + \mathcal{O}(e^{-\alpha/t}), \,\, t \downarrow 0,
\end{equation}
for some $\alpha > 0$. In other words, the first three terms are the same as for the Dirichlet boundary condition, except that the second term now has a plus sign instead of a minus sign, followed by an exponentially small remainder. However, they were unable to compute $\alpha$, or even provide a lower bound. This is because they did not treat planar polygons directly. Instead, they considered compact polyhedral surfaces, whose exponential remainder term was obtained by integrating local heat kernels associated with different regions of the surface. These local heat kernels were not given in explicit form, but only characterized through their general asymptotic behavior and known exponential decay properties. The heat trace of a polyhedral surface decomposes as the sum of the Dirichlet and Neumann heat traces of the associated polygons, from which \eqref{eq:heat_trace_expansion_polygon_Neumann} can be deduced. 
However, since the decay rates of these local contributions were not computed explicitly, no estimate of the exponent $\alpha$ can be extracted from their argument. 
In the Dirichlet case, \cite{van1988heat} obtain an estimate of the exponent using a probabilistic representation of the heat kernel in terms of Brownian motion. In the Neumann case, a corresponding representation is more involved, and the argument in \cite{van1988heat} does not transfer directly in a straightforward way. Instead of pursuing this probabilistic route, we develop an alternative approach based on \textit{locality principles}, which allows us to obtain an explicit remainder. To state our main result, we start as in \cite{van1988heat} by defining
\begin{equation} \label{eq:def_gamma}
    \gamma := \min_i \gamma_i,
\end{equation}
\begin{equation} \label{eq:def_B_i}
    B_i(r) := \{x \in S_{\gamma_i} : d(x,P_i) < r\}, \,\, i = 1,\dots,n, \,\, r > 0,
\end{equation}
\begin{equation} \label{eq:def_R}
    R := \frac{1}{2} \sup\bigg\{r > 0 : B_i(r) \cap B_j(r) = \emptyset \text{ for all } i \neq j \text{ and } \bigcup_{k=1}^n B_k(r) \subset \Omega\bigg\}.
\end{equation}
Here, $P_i$ are the corners of the polygon, and $S_{\gamma_i}$ is the infinite sector with vertex $P_i$ and interior angle $\gamma_i$ such that $\partial S_{\gamma_i}$ contains the two edges of $\Omega$ that meet at $P_i$. Then we have the following.
\begin{theorem} \label{thm:main_result}
    Let $\Omega \subset \R^2$ be an $n$-sided polygon with interior angles $\gamma_1,\dots,\gamma_n$. Let $\gamma$ and $R$ be as in \eqref{eq:def_gamma} and \eqref{eq:def_R}, respectively. Then, there is a $C > 0$ such that the Neumann heat trace satisfies
    \begin{equation} \label{eq:Neumann_final_bound}
    \bigg|Z(t) - \frac{|\Omega|}{4\pi t} - \frac{|\partial\Omega|}{8\sqrt{\pi t}} - \sum_{i=1}^n \frac{\pi^2-\gamma_i^2}{24\pi\gamma_i}\bigg| \leq Ce^{-R^2\sin^2(\gamma/2)/(16t)}, \,\, t > 0.
    \end{equation}
\end{theorem}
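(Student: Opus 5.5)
We follow the strategy of van den Berg and Srisatkunarajah, replacing their probabilistic estimates with deterministic \emph{locality principles} for the Neumann heat kernel.

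First we partition $\Omega$ into three kinds of pieces:
\begin{itemize}
\item the corner regions $B_i(R)\cap\Omega$, $i=1,\dots,n$;
\item edge strips near each edge away from the corners;
\item the remaining interior region, at distance comparable to $R\sin(\gamma/2)$ from $\partial\Omega$.
\end{itemize}
On each piece we compare the heat kernel $H_\Omega(t,x,x)$ with a model kernel:
\begin{itemize}
\item the Neumann heat kernel $H_{S_{\gamma_i}}$ of the infinite sector near $P_i$;
\item the Neumann half-plane kernel $(4\pi t)^{-1}\bigl(1+e^{-d(x,\text{edge})^2/t}\bigr)$ near an edge;
\item the free kernel $(4\pi t)^{-1}$ in the interior.
\end{itemize}

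The key analytic input is a locality estimate of the following form. Let $U\subset\Omega$ and $U\subset D$, where $\Omega$ and $D$ coincide near $U$, with the same boundary pieces carrying the same Neumann condition, and differ only at distance at least $\delta$ from $x$. Then
\begin{equation*}
|H_\Omega(t,x,x)-H_D(t,x,x)|\le C t^{-1}e^{-\delta^2/(4t)}\cdot(\text{polynomial factors}).
\end{equation*}
We prove this with a cutoff/Duhamel argument. Take a cutoff $\chi$ equal to $1$ near $x$ and supported where the domains agree, and write
\begin{equation*}
\chi(y)\bigl(H_\Omega-H_D\bigr)(t,x,y)=\int_0^t\!\int \bigl([\Delta,\chi]H_D\bigr)(s,x,z)\,H_\Omega(t-s,z,y)\,dz\,ds .
\end{equation*}
The commutator is supported where $d(x,z)\ge\delta$. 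It is controlled using Gaussian upper bounds for Neumann heat kernels and their gradients on sectors, half-planes and Lipschitz domains, namely $|H|\le Ct^{-1}e^{-|x-y|^2/((4+\epsilon)t)}$. The loss in $\epsilon$ is exactly why the stated exponent carries the constant $16$ rather than something sharper: choosing $\delta=R\sin(\gamma/2)/2$-type distances and absorbing $\epsilon$ gives $e^{-R^2\sin^2(\gamma/2)/(16t)}$. We also need the model kernels to decay off the diagonal. Note that Neumann kernels lack the Dirichlet maximum-principle comparison, so domain monotonicity is not available. Instead we use the Davies–Gaffney/Gaussian bounds, which hold for Neumann conditions on Lipschitz domains.

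Next we integrate the model kernels over their pieces:
\begin{itemize}
\item The free part gives $|\Omega|/(4\pi t)$.
\item The half-plane reflection term integrated over edge strips gives $\int_0^\infty e^{-s^2/t}\,ds/(4\pi t)=\tfrac{1}{8\sqrt{\pi t}}$ per unit length, up to exponentially small tails. The plus sign comes from the Neumann reflection.
\item For each sector we need $\int_{B_i(R)}\bigl(H_{S_{\gamma_i}}(t,x,x)-\text{free}-\text{edge terms}\bigr)dx$. By scaling, this equals the full sector constant $(\pi^2-\gamma_i^2)/(24\pi\gamma_i)$ minus a tail integral over $|x|>R$. The constant is the same as in the Dirichlet case, via the Kontorovich–Lebedev representation in which the Neumann and Dirichlet corner contributions coincide. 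The tail must be shown to be $O(e^{-R^2\sin^2(\gamma_i/2)/t})$, using the Kontorovich–Lebedev/Sommerfeld contour representation of the sector kernel. The $\sin(\gamma/2)$ enters here: the nonreflected image contributions decay like $e^{-|x|^2\sin^2(\cdot)/t}$.
\end{itemize}
Finally we handle the geometric bookkeeping. The pieces overlap near the boundaries of the corner balls, and the partition must be arranged so that the edge-strip regions and the corner regions match with errors that are only exponential.

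The main obstacle is the locality step for Neumann conditions near the corners, together with the sector tail estimate. Gradient Gaussian bounds for the Neumann sector kernel up to the vertex are delicate because of corner singularities. I would first try to avoid gradients by putting the commutator onto smooth cutoffs away from the vertex, where the local geometry is a half-plane or the plane. Summing all errors, each of which is bounded by $Ce^{-R^2\sin^2(\gamma/2)/(16t)}$ for $t\le1$, and using boundedness of both sides for $t\ge1$, yields the theorem.
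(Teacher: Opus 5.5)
Your proposal is correct in outline and shares the paper's architecture. It uses the same partition into $B_i(R)$, edge strips and an interior region, and the same Gaussian bounds for Neumann kernels. At the corners it relies on the van den Berg--Srisatkunarajah Kontorovich--Lebedev split $G_\gamma=(A+B+C)/\pi^2$: your ``free'', ``edge terms'' and ``constant minus tail'' are the $A$-, $B$- and $C$-contributions, with tail $-A_{\gamma_i}(t)$. The paper simply quotes their Thm.~2 and Cor.~3 rather than redoing any contour analysis. The $t\ge 1$ boundedness argument is also the same.

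The genuine difference is the locality lemma. The paper glues the parametrix $G=\tilde\chi_1H_S\chi_1+\tilde\chi_2H_\Omega\chi_2$, bounds the iterated convolutions $E^{*k}$ with factorial decay, sums the Neumann series $K$, and invokes Sher's identity $H_\Omega-G=-G*K$. This quantitative transcription of the Nursultanov--Rowlett--Sher argument gives a lemma for arbitrary $\Omega_0$. You instead use one Duhamel step with a single cutoff, which is shorter and needs no iteration. Both factors of your integrand are Gaussian in $|x-z|\ge d$, so $\frac1s+\frac1{t-s}\ge\frac4t$ gives $e^{-(4-\eta)d^2/(ct)}$ for any $\eta>0$. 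At the same cutoff distance this is better than the paper's $e^{-\epsilon^2/(ct)}$.

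Your point that gradients are needed only on $\supp\nabla\chi$, away from the vertices, is well taken. Near a reflex vertex, Neumann solutions contain a term $r^{\pi/\gamma_i}\cos(\pi\theta/\gamma_i)$, so the global gradient hypothesis \eqref{eq:H_S_bounds} fails as stated there. The paper's proof, however, also only evaluates gradients on $\supp\nabla\tilde\chi_j$.

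A few corrections. With $\chi(x)=1$ and the commutator acting in $z$, the identity is
\begin{equation*}
\chi(y)H_D(t,x,y)-H_\Omega(t,x,y)=\int_0^t\!\!\int_\Omega \bigl([\Delta,\chi]H_D\bigr)(s,x,z)\,H_\Omega(t-s,z,y)\,dz\,ds .
\end{equation*}
Your left side has the wrong sign, which is harmless on the diagonal. For Neumann conditions you also need $\partial_n\chi=0$ on $\partial\Omega\cap\supp\nabla\chi$ (e.g.\ radial cutoffs at a corner), or else you must carry a boundary integral, which obeys the same Gaussian bound.

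The $16$ comes from $\epsilon=R\sin(\gamma/2)/2$ together with the Gaussian constant $4$. It does not come from the $\sigma$-loss, which Remark~\ref{remark:worse_exponent} absorbs.

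The strips $C(R,\gamma)$ alone do not give $\frac{1}{8\sqrt{\pi t}}$ per unit length up to exponentially small errors. Their level sets have length $|\partial\Omega|-2n\sqrt{R^2-y^2}$, so they carry a deficit
\begin{equation*}
\frac{n}{2\pi t}\int_0^{2R\sin(\gamma/2)/3}\sqrt{R^2-y^2}\,e^{-y^2/t}\,dy ,
\end{equation*}
which has a full expansion in powers of $\sqrt t$. This deficit is cancelled, up to an exponentially small piece, by the corner $B$-terms
\begin{equation*}
\frac{R^2}{2\pi t}\int_0^1\sqrt{1-y^2}\,e^{-R^2y^2/t}\,dy=\frac{1}{2\pi t}\int_0^R\sqrt{R^2-y^2}\,e^{-y^2/t}\,dy .
\end{equation*}
So your corner ``edge terms'' must be exactly this $B$-term. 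With naive reflections in the two edges, the corner constant shifts (by $\frac{\cot\gamma_i}{4\pi}$ when $\gamma_i\le\pi/2$), with a compensating error in the strips.
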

The proof of Theorem \ref{thm:main_result} relies on locality principles, specifically Lemmas \ref{lemma:locality_principle_general}-\ref{lemma:locality_principle_interior}, which approximate the Neumann heat kernel $H_\Omega(t,x,y)$ by those on model domains, depending on the positions of $x,y \in \Omega$. Away from the boundary, $H_\Omega(t,x,y)$ is approximated by the heat kernel $H_{\R^2}(t,x,y)$ on the Euclidean plane:
\begin{equation} \label{eq:heat_kernel_R^2}
    H_{\R^2}(t,x,y) = \frac{1}{4\pi t} e^{-|x-y|^2/(4t)}.
\end{equation}
Near an edge but away from the corners, $H_\Omega(t,x,y)$ is approximated by the Neumann heat kernel $H_{\R^2_+}(t,x,y)$ on the Euclidean half-plane $\R^2_+ = \{x = (x_1,x_2) \in \R^2 : x_2 > 0\}$,
\begin{equation} \label{eq:heat_kernel_R^2_+}
    H_{\R^2_+}(t,x,y) = H_{\R^2}(t,x,y) + H_{\R^2}(t,x,y^*),
\end{equation}
where $y^* = (y_1,-y_2)$ is the reflection of $y = (y_1,y_2)$ across $\partial\R^2_+ = \{(x_1,x_2) \in \R^2 : x_2 = 0\}$. Near a corner $P_i$ with angle $\gamma_i$, $H_\Omega(t,x,y)$ is approximated by the Neumann heat kernel $H_{\gamma_i}(t,x,y)$ on an infinite sector $S_{\gamma_i}$ with interior angle $\gamma_i$. The heat kernel $H_{\gamma_i}(t,x,y)$ is more involved than $H_{\R^2}(t,x,y)$ and $H_{\R^2_+}(t,x,y)$, but well studied; see e.g. \cite{carslaw1910green}, \cite{fedosov1963asymptotic, fedosov1964asymptotic}, \cite{mckean1967curvature}, \cite{cheeger1983spectral}.
Using the locality principles, we decompose the heat trace into corner, edge, and interior contributions, and estimate the difference between the true heat kernel and the appropriate model heat kernel on each region.

Using Theorem \ref{thm:main_result} together with the Dirichlet heat trace expansion \eqref{eq:Dirichlet_final_bound}, we can also obtain a corresponding short-time asymptotic expansion for the heat trace of compact polyhedral surfaces. As explained in \cite[p. 3766]{hezari2017neumann}, from any polygon $\Omega$ we may form a compact Euclidean surface with conical singularities $\Sigma$ by gluing together two copies of $\Omega$ along their boundaries. The resulting surface has vertices $P_1,\dots,P_n$ with angles twice the size of the interior angles of $\Omega$, and the Friedrichs extension of the Laplacian on $C_0^\infty (\Sigma \backslash\{P_1,\dots,P_n\})$ has spectrum equal to the union of the Dirichlet and Neumann spectra of $\Omega$, counting multiplicity. In particular, the heat trace $Z(t)$ of $\Sigma$ satisfies
\begin{equation*}
    Z(t) = Z^D(t) + Z^N(t),
\end{equation*}
where $Z^D(t)$ and $Z^N(t)$ are the Dirichlet and Neumann heat traces of $\Omega$, respectively. Combining this with \eqref{eq:Dirichlet_final_bound}, Theorem \ref{thm:main_result}, and the observation
\begin{equation*}
    \bigg(\frac{|\Omega|}{4\pi t} - \frac{|\partial\Omega|}{8\sqrt{\pi t}} + \sum_{i=1}^n \frac{\pi^2-\gamma_i^2}{24\pi\gamma_i}\bigg) + \bigg(\frac{|\Omega|}{4\pi t} + \frac{|\partial\Omega|}{8\sqrt{\pi t}} + \sum_{i=1}^n \frac{\pi^2-\gamma_i^2}{24\pi\gamma_i}\bigg) = \frac{|\Omega|}{2\pi t} + \sum_{i=1}^n \frac{\pi^2-\gamma_i^2}{12\pi\gamma_i},
\end{equation*}
we obtain the following result.
\begin{theorem}
Let $\Omega \subset \R^2$ be an $n$-sided polygon with vertices $P_1,\dots,P_n$ of interior angles $\gamma_1,\dots,\gamma_n$. Let $\gamma$ and $R$ be as in \eqref{eq:def_gamma} and \eqref{eq:def_R}, respectively. Let $\Sigma$ be the corresponding compact polyhedral surface, and let $\Delta$ denote the Friedrichs extension of the Laplace operator defined on $C_0^\infty(\Sigma \setminus \{P_1,\dots,P_n\})$. Then, there is a $C > 0$ such that the heat trace $Z(t)$ of $\Sigma$ satisfies
\begin{equation*} 
\bigg|Z(t) - \frac{|\Omega|}{2\pi t} - \sum_{i=1}^n \frac{\pi^2-\gamma_i^2}{12\pi\gamma_i}\bigg| \leq Ce^{-R^2\sin^2(\gamma/2)/(16t)}, \,\, t > 0.
\end{equation*}
Equivalently, if we denote the conical angles of $\Sigma$ by $\beta_1,\dots,\beta_n$, then
\begin{equation*}
\bigg|Z(t) - \frac{|\Sigma|}{4\pi t} - \sum_{i=1}^n \frac{4\pi^2-\beta_i^2}{24\pi\beta_i}\bigg| \leq Ce^{-R^2\sin^2(\gamma/2)/(16t)}, \,\, t > 0.
\end{equation*}
\end{theorem}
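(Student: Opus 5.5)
The plan is to deduce the statement directly from the decomposition $Z(t) = Z^D(t) + Z^N(t)$ of the heat trace of $\Sigma$, which holds because the Friedrichs extension on $\Sigma$ has spectrum equal to the union, with multiplicity, of the Dirichlet and Neumann spectra of $\Omega$ (as recalled from \cite{hezari2017neumann}). Concretely, eigenfunctions on the doubled surface split into even and odd parts under the reflection swapping the two copies of $\Omega$. Odd eigenfunctions restrict to Dirichlet eigenfunctions of $\Omega$, and even ones restrict to Neumann eigenfunctions. Conversely, Dirichlet and Neumann eigenfunctions extend oddly or evenly, respectively, to elements of the form domain of the Friedrichs extension. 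I will take this identification as given, since the text preceding the statement asserts it.

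Given this, I subtract the explicit terms. By \eqref{eq:Dirichlet_final_bound}, $Z^D(t)$ equals its three-term expansion plus an error bounded by $C_1 e^{-R^2\sin^2(\gamma/2)/(16t)}$ for small $t$. By Theorem \ref{thm:main_result}, $Z^N(t)$ equals its three-term expansion plus an error bounded by $C_2 e^{-R^2\sin^2(\gamma/2)/(16t)}$ for all $t>0$. The perimeter terms $\mp|\partial\Omega|/(8\sqrt{\pi t})$ cancel. The area terms add to $|\Omega|/(2\pi t)$, and the corner terms add to $\sum_i (\pi^2-\gamma_i^2)/(12\pi\gamma_i)$, as in the displayed observation. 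The triangle inequality then gives the first estimate with $C = C_1 + C_2$, at least for $t\le t_0$.

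The one genuine point to check is that the bound holds for all $t>0$, whereas \eqref{eq:Dirichlet_final_bound} is stated only as $t\downarrow 0$. For $t \ge t_0$, I bound each piece separately. The trace $Z(t)$ is bounded by $Z(t_0)$, which is finite by Weyl's law, since $Z$ is decreasing in $t$. The explicit terms are bounded, because $|\Omega|/(2\pi t) \le |\Omega|/(2\pi t_0)$. Meanwhile $e^{-R^2\sin^2(\gamma/2)/(16t)} \ge e^{-R^2\sin^2(\gamma/2)/(16t_0)} > 0$. So the left-hand side is bounded on $[t_0,\infty)$ and the right-hand side is bounded below there by a positive constant, and enlarging $C$ covers all $t>0$.

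For the second form, the conical angles of $\Sigma$ are $\beta_i = 2\gamma_i$ and $|\Sigma| = 2|\Omega|$. Then $|\Sigma|/(4\pi t) = |\Omega|/(2\pi t)$, and
\[
\frac{4\pi^2-\beta_i^2}{24\pi\beta_i} = \frac{4(\pi^2-\gamma_i^2)}{48\pi\gamma_i} = \frac{\pi^2-\gamma_i^2}{12\pi\gamma_i},
\]
so the two statements are identical. The main obstacle, if any, is the spectral decomposition of the Friedrichs extension. Since it is quoted from the literature, it is not reproved here, and the rest is bookkeeping.
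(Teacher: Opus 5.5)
Your proposal is correct and follows the paper's argument. The paper likewise writes $Z = Z^D + Z^N$, combines \eqref{eq:Dirichlet_final_bound} with Theorem \ref{thm:main_result}, adds the two expansions, and converts via $\beta_i = 2\gamma_i$ and $|\Sigma| = 2|\Omega|$. Your explicit treatment of $t \ge t_0$ (monotonicity of $Z$ together with the positive lower bound on the exponential) fills in a step the paper leaves implicit, since \eqref{eq:Dirichlet_final_bound} is stated only as $t \downarrow 0$.
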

The article is organized as follows. In \S\ref{sec:locality_principles}, we introduce the locality principles and show how to obtain them for the different parts of the polygon. In \S\ref{sec:heat_trace}, we use the locality principles to prove Theorem \ref{thm:main_result}. Finally, in \S\ref{sec:outlook}, we offer some possible directions for further study.


\section*{Acknowledgments} 
The author would like to thank Julie Rowlett and Simon Larson for invaluable support during this project. The author would also like thank Magnus Goffeng, Grigori Rozenblum, Jeffrey Steif, and Elton Hsu for helpful discussions.

\section{Locality principles} \label{sec:locality_principles}
To estimate the Neumann heat trace of a polygonal domain $\Omega$, we rely on the principle that the heat kernel $H_\Omega(t,x,y)$ can be approximated locally by the heat kernel on simpler model domains. To formalize this idea, we use the notion of exact geometric matches, which was introduced in \cite[]{nursultanov2019hear}. Let $\Omega_0, \Omega,S \subset \mathbb{R}^n$ be domains with $\Omega_0 \subset \Omega \cap S$. We say that $\Omega$ and $S$ are \textit{exact geometric matches} on $\Omega_0$ if $\partial \Omega_0 \cap \partial \Omega = \partial \Omega_0 \cap \partial S$. This setting allows us to compare the heat kernels $H_\Omega(t,x,y)$ and $H_S(t,x,y)$ for points $x, y \in \Omega_0$ as $t \downarrow 0$.  

\subsection{General patchwork construction} \label{sec:patchwork_construction}
Let $\Omega_0, \Omega, S \subset \R^n$ be domains such that $\Omega$ and $S$ are exact geometric matches on $\Omega_0$, and assume that the associated Laplace operators are equipped with the same boundary condition. In \cite[Thm. 4, Thm. 5]{nursultanov2019hear}, it is shown that for the Neumann and Robin boundary conditions, one has
\begin{equation} \label{eq:weak_bound}
|H_\Omega(t,x,y) - H_S(t,x,y)| = \mathcal{O}(t^\infty), \,\, x,y \in \Omega_0, \,\, t \downarrow 0,
\end{equation}
where the notation $\mathcal{O}(t^\infty)$ indicates that the error decays faster than any power of $t$ as $t \downarrow 0$. The corresponding statement for the Dirichlet boundary condition had been established earlier and is known as \textit{Kac's principle of not feeling the boundary}. Note that \cite{nursultanov2019hear} has certain assumptions about $\Omega$ and $S$, such as the $(\epsilon,h)$-cone condition and that the second fundamental forms of $\partial\Omega$ and $\partial S$ are bounded below. However, such assumptions are satisfied in our setting where $\Omega$ is a polygon and $S$ is either an infinite sector, the Euclidean half-plane, or the Euclidean plane; see \cite[Remark 1]{nursultanov2019hear}.

Although \eqref{eq:weak_bound} is sufficient for the purposes of \cite{nursultanov2019hear}, it does not provide explicit control on the decay rate. For our application, such quantitative information is needed in order to obtain an explicit remainder estimate for the heat trace. We therefore revisit the proof of \cite[Thm. 4]{nursultanov2019hear} in detail, but make the decay rate explicit.
Throughout, by an \textit{$\epsilon$-neighborhood} of a subset $A \subset \Omega$ we mean 
\begin{equation*}
    \{x \in \Omega : d(x,A) < \epsilon\}.
\end{equation*}
In particular, neighborhoods are taken relative to $\Omega$ rather than to $\R^2$. 

\begin{lemma}[Locality principle] \label{lemma:locality_principle_general}
    Let $\Omega_0,\Omega,S \subset \R^2$ be domains with Lipschitz boundary. 
    Assume there are constants $C, c, m > 0$ such that the Neumann heat kernels $H_{D_j}(t,x,y)$ satisfy the uniform bounds 
    \begin{align} \label{eq:H_S_bounds}
        \begin{split}
            |H_{D_j}(t,x,y)| \leq \frac{C}{t^{m}}e^{-|x-y|^2/(c t)}, \,\, x,y \in D_j \cap \Omega, \,\, 0 < t < 1, \\
            |\nabla H_{D_j}(t,x,y)| \leq \frac{C}{t^{m}}e^{-|x-y|^2/(c t)}, \,\, x,y \in D_j \cap \Omega, \,\, 0 < t < 1,
        \end{split}
    \end{align}
    for $j = 1,2$, where $D_1 = \Omega$ and $D_2 = S$. Then, for $\epsilon,\delta > 0$ such that $\Omega$ and $S$ are exact geometric matches on an $(\epsilon+\delta)$-neighborhood of $\Omega_0$, there is a $C' > 0$ such that 
    \begin{equation} \label{eq:locality_principle_general}
        |H_\Omega(t,x,x) - H_{S}(t,x,x)| \leq C'e^{-\epsilon^2/(ct)}, \,\, x \in \Omega_0, \,\, 0 < t < 1.
    \end{equation}
\end{lemma}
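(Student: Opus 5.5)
Following the patchwork/parametrix argument of \cite[Thm. 4]{nursultanov2019hear}, I will write $H_\Omega - H_S$ on $\Omega_0$ as a Duhamel-type boundary integral supported where a cutoff changes, and then use the Gaussian bounds \eqref{eq:H_S_bounds} to make the decay explicit.

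\textbf{Cutoff.} Let $U_\epsilon$ and $U_{\epsilon+\delta}$ denote the $\epsilon$- and $(\epsilon+\delta)$-neighborhoods of $\Omega_0$. Fix a smooth cutoff $\chi$ on $\R^2$ with $\chi\equiv 1$ on $U_\epsilon$ and $\supp\chi\cap\Omega\subset U_{\epsilon+\delta}$. Its gradient is then supported in $U_{\epsilon+\delta}\setminus U_\epsilon$, and $|\nabla\chi|,|\Delta\chi|\le C_\delta$. Because $\Omega$ and $S$ are exact geometric matches on $U_{\epsilon+\delta}$, $\chi(y)H_S(t,x,y)$ is a function on $\Omega$ satisfying the Neumann condition on $\partial\Omega\cap\supp\chi$.

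\textbf{Duhamel formula.} Fix $x\in\Omega_0$ and put
\[
u(s,y)=\chi(y)H_S(s,x,y), \qquad v(s,y)=H_\Omega(t-s,x,y).
\]
I compute $\frac{d}{ds}\int_\Omega u(s,y)v(s,y)\,dy$ and integrate over $s\in(0,t)$. The boundary terms vanish because both kernels satisfy the Neumann condition on the matched boundary and $\chi$ kills the rest. As $s\downarrow0$, $u\to\delta_x$ since $\chi(x)=1$; as $s\uparrow t$, $v\to\delta_x$. This gives
\[
\chi(x)H_S(t,x,x)-H_\Omega(t,x,x)=\int_0^t\!\!\int_\Omega \big(2\nabla\chi\cdot\nabla_y H_S(s,x,y)+\Delta\chi\,H_S(s,x,y)\big)H_\Omega(t-s,x,y)\,dy\,ds,
\]
up to sign conventions. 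Equivalently, after integrating by parts once, one can write it so that only first derivatives of the kernels appear, which is why gradient bounds are assumed. To make this rigorous on Lipschitz domains I will work in the weak formulation and cut off $s$ near the endpoints, passing to the limit using the Gaussian bounds.

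\textbf{Estimate.} On the support of $\nabla\chi$ we have $|x-y|\ge\epsilon$. By \eqref{eq:H_S_bounds}, the integrand is bounded by
\[
C^2 C_\delta\, s^{-m}(t-s)^{-m}e^{-|x-y|^2/(cs)}e^{-|x-y|^2/(c(t-s))}.
\]
Since $\frac1s+\frac1{t-s}=\frac{t}{s(t-s)}\ge \frac4t$, the product of exponentials is at most
\[
e^{-|x-y|^2 t/(c s(t-s))}\le e^{-\epsilon^2/(ct)}\,e^{-\epsilon^2 t/(2cs(t-s))},
\]
where I split off half of the exponent in a weakened form. The remaining factor $e^{-\epsilon^2 t/(2cs(t-s))}$ absorbs $s^{-m}(t-s)^{-m}$ uniformly, because $\tau^{-2m}e^{-a/\tau}$ is bounded with $\tau=s(t-s)/t$ comparable to $\min(s,t-s)$. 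Integrating over the bounded $y$-region (or using the Gaussian to control tails) and over $s\in(0,t)$ with $t<1$ then gives
\[
|H_\Omega(t,x,x)-H_S(t,x,x)|\le C'e^{-\epsilon^2/(ct)},
\]
with $C'$ depending on $\delta,\epsilon,C,c,m$ and the area of $U_{\epsilon+\delta}$. If a sharper constant is needed, I will use the full exponent $4\epsilon^2/(ct)$ only partially.

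\textbf{Main obstacle.} The delicate step is justifying the Duhamel identity and the vanishing of the boundary terms for Neumann kernels on Lipschitz domains with corners. The kernels are not smooth up to the corners, so I will rely on the weak Neumann formulation, $H^1$ regularity for $s>0$, and the gradient bounds to control the endpoint limits $s\to0,t$. The exponential bookkeeping itself is routine.
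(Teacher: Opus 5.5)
Your argument is correct, but it takes a different and shorter route than the paper.

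\textbf{The paper's route.} It builds the two-piece patchwork parametrix $G=\tilde\chi_1H_S\chi_1+\tilde\chi_2H_\Omega\chi_2$, with two partitions of unity and margin $\epsilon'=\delta/3$. It computes the error $E=(\partial_t+\Delta)G$ and sums the Neumann series $K=\sum_{k\ge1}(-1)^{k+1}E^{*k}$, using inductive bounds on $E^{*k}$. The representation $H_\Omega-G=-G*K$ is imported from \cite[Prop. 7]{sher2015conic}.

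\textbf{Your route.} You pair the cut-off model kernel $\chi H_S(s,x,\cdot)$ directly with the exact propagator $H_\Omega(t-s,x,\cdot)$. Since $H_\Omega$ and its Gaussian bounds are among the hypotheses, nothing is lost. In fact the paper's series resums to a single Duhamel term of your type: $K=E-K*E$ gives $G*K=(G-G*K)*E=H_\Omega*E$.

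\textbf{What each buys.} Your approach needs no second cutoff and no iterated convolutions. It also keeps a better exponent: $e^{-r^2/(cs)}e^{-r^2/(c(t-s))}\le e^{-4r^2/(ct)}$, so you can reach $e^{-a/t}$ for any $a<4\epsilon^2/c$. The paper's bookkeeping (e.g.\ \eqref{eq:GE_bound} and the bound on $G*K$) retains only $e^{-\epsilon^2/(ct)}$. In exchange, the paper's representation formula is a cited black box, while you must prove Green's identity yourself. You rightly flag this as the real work. It involves the weak Neumann formulation, and checking that $\chi H_\Omega(t-s,x,\cdot)$ extended by zero lies in $H^1(S)$; for that, choose $\chi$ with $\supp\chi\cap\overline{\Omega}$ compactly inside the matched region. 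It also involves the approximate-identity limits as $s\to0$ and $s\to t$.

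\textbf{One correction.} Your displayed identity with $\Delta\chi$ is not equivalent to the first-derivative form unless $\partial_n\chi=0$ on $\partial\Omega$. The second integration by parts produces the term $\int_0^t\int_{\partial\Omega}\partial_n\chi\,H_SH_\Omega\,d\sigma\,ds$, which the Neumann conditions do not remove. The weak formulation gives directly
\[
H_S(t,x,x)-H_\Omega(t,x,x)=\int_0^t\!\!\int_\Omega\nabla\chi(y)\cdot\big(H_S(s,x,y)\nabla_yH_\Omega(t-s,x,y)-H_\Omega(t-s,x,y)\nabla_yH_S(s,x,y)\big)\,dy\,ds,
\]
with no boundary terms. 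This is the version to estimate. Your exponential bookkeeping applies to it verbatim, since the integrand is supported where $|x-y|\ge\epsilon$.
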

\begin{proof}
Let $\{\chi_1, \chi_2\}$ be a smooth partition of unity for $\Omega$ such that 
\begin{align*}
    \chi_1 = 1 \text{ in } \Omega_0, \text{ and } \chi_1 < 1 \text{ elsewhere}, \\
    \chi_1 > 0 \text{ in an } \epsilon\text{-neighborhood of } \Omega_0, \text{ and } \chi_1 = 0 \text{ elsewhere}.
\end{align*}
Write $\epsilon' := \delta/3$ and let $0 \leq \Tilde{\chi}_1, \Tilde{\chi}_2 \leq 1$ be smooth functions on $\Omega$ such that for $j = 1,2$,
\begin{align*}
    &\Tilde{\chi}_j = 1 \text{ in an } 2\epsilon' \text{-neighborhood of } \supp(\chi_j), \text{ and } \tilde{\chi}_j < 1 \text{ elsewhere}, \\
    &\tilde{\chi}_j > 0 \text{ in an } \epsilon' \text{-neighborhood of } \{\tilde{\chi}_j = 1\}, \text{ and } \tilde{\chi}_j = 0 \text{ elsewhere}.
\end{align*}
In particular, $\Omega$ and $S$ are exact geometric matches on $\supp(\tilde{\chi}_1)$. Now define the patchwork heat kernel 
\begin{equation} \label{eq:H_Omega-G}
    G(t,x,y) := \Tilde{\chi}_1(x)H_{S}(t,x,y)\chi_1(y) + \Tilde{\chi}_2(x)H_{\Omega}(t,x,y)\chi_2(y), \,\, x,y \in \Omega, \,\, t > 0.
\end{equation}
Since $\Tilde{\chi}_1(x)H_{S}(t,x,y)\chi_1(y) = 0$ when either $x$ or $y$ is outside of $\supp(\tilde{\chi}_1)$, $G(t,x,y)$ is well defined and smooth on all of $\Omega$, even outside of $S$. Moreover, for $x,y \in \Omega_0$ we have $\Tilde{\chi}_1(x) = \chi_1(y) = 1$ and $\chi_2(y) = 0$, hence 
\begin{equation} \label{eq:G=H_S}
    G(t,x,y) = H_S(t,x,y).
\end{equation}
It is therefore enough to estimate $|H_\Omega(t,x,y) - G(t,x,y)|$ for $x,y \in \Omega_0$. To achieve this, define
\begin{align*}
    E(t,x,y) := \left(\frac{\partial}{\partial t} + \Delta\right)G(t,x,y), \,\, x,y \in \Omega, \,\, t > 0,
\end{align*}
where $\Delta$ acts in $x$. From
\begin{align*}
    \frac{\partial H_{D_j}}{\partial t} + \Delta H_{D_j} = 0 \text{ in } (0,\infty) \times {D_j} \times {D_j}, \\
    \Delta(fg) = \Delta(f)g + f\Delta(g) + 2\nabla f\cdot\nabla g, 
\end{align*}
it is straightforward to verify that
\begin{equation*} 
    E(t,x,y) = [\Delta, \Tilde{\chi}_1(x)]H_{S}(t,x,y)\chi_1(y) + [\Delta, \Tilde{\chi}_2(x)]H_{\Omega}(t,x,y)\chi_2(y).
\end{equation*}
Then, since $[\Delta, \Tilde{\chi}_j(x)] = 0$ in $\supp(\chi_j)$, it follows that
\begin{equation} \label{eq:E_simplification}
    E(t,x,y) = 
    \begin{cases}
        [\Delta, \Tilde{\chi}_1(x)]H_{S}(t,x,y)\chi_1(y), \text{ if } \chi_1(x) = 0, \\
        [\Delta, \Tilde{\chi}_2(x)]H_{\Omega}(t,x,y)\chi_2(y), \text{ if } \chi_1(x) = 1, \\
        0, \text{ otherwise}.
    \end{cases}
\end{equation}
Moreover, for $|x-y| < 2\epsilon'$, we have either $\chi_j(y) = 0$ or $\tilde{\chi}_j(x) = 1$ for $j = 1,2$, hence $E(t,x,y) = 0$. In other words, $E(t,x,y) \neq 0$ only if $|x-y| \geq 2\epsilon'$. Then, since
\begin{equation*}
    [\Delta, \Tilde{\chi}_j(x)]H_{D_j}(t,x,y)\chi_j(y) = (\Delta(\tilde{\chi}_j(x))H_{D_j}(t,x,y)\chi_j(y) + 2 \nabla \tilde{\chi}_j(x) \cdot \nabla(H_{D_j}(t,x,y))\chi_j(y),
\end{equation*}
it follows from \eqref{eq:H_S_bounds} that
\begin{equation*}
    |[\Delta, \Tilde{\chi}_j(x)]H_{D_j}(t,x,y)\chi_j(y)| \leq \frac{C}{t^{m}}  e^{-(2\epsilon')^2/(c t)} = \frac{C}{t^{m}} e^{-3(\epsilon')^2/(c t)}e^{-(\epsilon')^2/(c t)}, \,\, x,y \in \Omega, \,\, 0 < t < 1,
\end{equation*}
for some $C > 0$. Since 
\begin{equation*}
    \frac{C}{t^{m}} e^{-3(\epsilon')^2/(c t)}
\end{equation*}
is bounded for $0 < t < 1$, it follows that there is a $C > 0$ such that
\begin{equation} \label{eq:E_fresh_bound}
    |E(t,x,y)| \leq Ce^{-(\epsilon')^2/(ct)}, \,\, x,y \in \Omega, \,\, 0 < t < 1.
\end{equation}
Next, define the convolution
\begin{equation} \label{eq:E*E_def}
    E * E (t,x,y) := \int_0^t \int_\Omega E(t-s,x,z)E(s,z,y)dz ds, \,\, x,y \in \Omega, \,\, t > 0.
\end{equation}
It is clear from \eqref{eq:E_simplification} that 
\begin{equation*}
    E(t,z,y) \neq 0 \text{ only if } z \in \supp(\tilde{\chi}_1) \subset \Omega \cap S,
\end{equation*}
hence the integrand in \eqref{eq:E*E_def} is well defined for all $z \in \Omega$. Now, for the rest of the proof, we assume $y \in \Omega_0$. If $\chi_1(x) = 0$, then by \eqref{eq:E_simplification}
    \begin{equation*}
        E(t-s,x,z)E(s,z,y) = [\Delta,\Tilde{\chi}_1(x)]H_S(t-s,x,z)\chi_1(z)[\Delta, \tilde{\chi}_1(z)]H_S(s,z,y).
    \end{equation*}
    This is zero for all $z \in \Omega$ since either $\chi_1(z) = 0$ or $[\Delta, \tilde{\chi}_1(z)] = 0$, hence $E*E(t,x,y) = 0$. Similarly, if $0 < \chi_1(x) < 1$, then $E(t-s,x,z) = 0$ for all $z \in \Omega$, making $E*E(t,x,y) = 0$. This shows that $E*E(t,x,y) \neq 0$ only if $\chi_1(x) = 1$. For such $x$, we have
    \begin{equation*}
        E(t-s,x,z)E(s,z,y) = [\Delta,\Tilde{\chi}_2(x)]H_\Omega(t-s,x,z)\chi_2(z)[\Delta, \tilde{\chi}_1(z)]H_S(s,z,y).
    \end{equation*}
    This is non-zero only if $0 < \tilde{\chi}_1(z) < 1$. For such $z$ and $y \in \Omega_0$, we have $|y-z| \geq \epsilon + 2\epsilon'$, hence
    \begin{equation} \label{eq:E_fresh_bound_with_epsilon}
        |E(s,z,y)| \leq Ce^{-\epsilon^2/(cs)}, \,\, 0 < s < t, \text{ for some } C > 0,
    \end{equation}
    by the same argument as with \eqref{eq:E_fresh_bound}. Combining this with the inequality $\frac{A}{t-s} + \frac{B}{s} \geq \frac{(\sqrt{A}+\sqrt{B})^2}{t}$ for $A,B > 0$ and $0 < s < t$, we obtain
    \begin{equation} \label{eq:E*E_fresh}
        \begin{split}
            |E*E(t,x,y)| &\leq \int_0^t \int_\Omega C e^{-(\epsilon')^2/(c(t-s))}Ce^{-\epsilon^2/(cs)} dzds \\
            &\leq C^2 e^{-(\epsilon+\epsilon')^2/(ct)} \int_0^t \int_\Omega dzds \\
            &= C^2|\Omega|t e^{-(\epsilon+\epsilon')^2/(ct)}, \,\, x \in \Omega, \,\, y \in \Omega_0, \,\, 0 < t < 1.
        \end{split}
    \end{equation}
    More generally, if we for $k \geq 2$ let $E^{*k}$ denote the iterated convolution of $E$ with itself $k$ times, then we have by induction
    \begin{equation} \label{eq:E^*_fresh}
        |E^{*k}(t,x,y)| \leq C^k|\Omega|^{k-1}\frac{t^{k-1}}{(k-1)!} e^{-(\epsilon+(k-1)\epsilon')^2/(ct)}, \,\, x \in \Omega, \,\, y \in \Omega_0, \,\, 0 < t < 1, \,\, k \geq 2.
    \end{equation}
    Indeed, by \eqref{eq:E*E_fresh} it holds for $k = 2$, and assuming it holds for some $k \geq 2$, we get
    \begin{align*}
        |E^{*(k+1)}(t,x,y)| &\leq \int_0^t \int_\Omega |E(t-s,x,z)||E^{*k}(s,z,y)|dzds \\
        &\leq \int_0^t \int_\Omega C e^{-(\epsilon')^2/(c(t-s))} C^k|\Omega|^{k-1}\frac{s^{k-1}}{(k-1)!} e^{-(\epsilon+(k-1)\epsilon')^2/(cs)} dzds \\
        &= C^{k+1}|\Omega|^{k-1}\frac{1}{(k-1)!} \int_0^t \int_\Omega s^{k-1} e^{-((\epsilon')^2/(c(t-s))+(\epsilon+(k-1)\epsilon')^2/(cs))} dzds \\
        &\leq C^{k+1}|\Omega|^{k-1}\frac{1}{(k-1)!} e^{-(\epsilon+k\epsilon')^2/(ct)}\int_0^t \int_\Omega s^{k-1} dzds \\
        &= C^{k+1}|\Omega|^{k}\frac{t^{k}}{k!} e^{-(\epsilon+k\epsilon')^2/(ct)}.
    \end{align*}
    Now define the Neumann series
    \begin{equation*}
        K(t,x,y) := \sum_{k=1}^\infty (-1)^{k+1} E^{*k}(t,x,y).
    \end{equation*}
    Here, $E^{*1} := E$. We start by only including $k \geq 2$ in the series and using \eqref{eq:E^*_fresh} to estimate 
    \begin{equation} \label{eq:K_fresh}
        \begin{split}
            \bigg|\sum_{k=2}^\infty (-1)^{k+1}E^{*k}(t,x,y)\bigg| &\leq \sum_{k=2}^\infty |E^{*k}(t,x,y)| \\
            &\leq \sum_{k=2}^\infty C^k|\Omega|^{k-1}\frac{t^{k-1}}{(k-1)!} e^{-(\epsilon+(k-1)\epsilon')^2/(ct)} \\
            &\leq Ce^{-\epsilon^2/(ct)} \sum_{k = 0}^\infty \frac{(C|\Omega|te^{-2\epsilon\epsilon'/(ct)})^k}{k!} \\
            &= Ce^{-\epsilon^2/(ct)}e^{C|\Omega|te^{-2\epsilon\epsilon'/(ct)}} \\
            &\leq Ce^{C|\Omega|}e^{-\epsilon^2/(ct)}, \,\, 0 < t < 1.
        \end{split}
    \end{equation}
    Finally, by \cite[Prop. 7]{sher2015conic}, 
    \begin{equation} \label{eq:Sher_prop}
        \begin{split}
            H_\Omega(t,x,y) - G(t,x,y) &= -G*K(t,x,y) \\
            &=-\int_0^t \int_\Omega G(t-s,x,z)K(s,z,y) dzds, \,\, x,y \in \Omega, \,\, t > 0.
        \end{split}
    \end{equation}
    Note that for $x,y \in \Omega_0$, the term in the integrand corresponding to $k = 1$ in the Neumann series becomes
    \begin{equation*}
        G(t-s,x,z)E(s,z,y) = \tilde{\chi}_2(x)H_\Omega(t-s,x,z)\chi_2(z)[\Delta, \tilde{\chi}_1(z)]H_S(s,z,y),
    \end{equation*}
    which is non-zero only if $0 < \tilde{\chi}_1(z) < 1$, hence
    \begin{align*}
        |x-z| \geq \epsilon + 2\epsilon', \\
        |y-z| \geq \epsilon + 2\epsilon'.
    \end{align*}
    Thus, we can argue as with \eqref{eq:E_fresh_bound} to obtain
    \begin{equation} \label{eq:GE_bound}
        \begin{split}
            |G(t-s,x,z)E(s,z,y)| &\leq C'e^{-\epsilon^2/(c(t-s))}e^{-\epsilon^2/(cs)} \\
            &\leq C'e^{-\epsilon^2/(ct)}, \,\, x,y \in \Omega_0, \,\, z \in \Omega, \,\, 0 < t < 1,
        \end{split}
    \end{equation}
    for some $C' > 0$. Combining this with \eqref{eq:K_fresh} gives for $x,y \in \Omega_0$ and $0 < t < 1$
    \begin{equation} \label{eq:G*K_fresh}
        \begin{split}
            |G*K(t,x,y)| &\leq \int_0^t \int_\Omega |G(t-s,x,z)||K(s,z,y)| dzds \\
            &\leq C'|\Omega|e^{-\epsilon^2/(ct)} + \int_0^t \int_\Omega (H_S(t-s,x,z)\chi_1(z) + \tilde{\chi}_2(x)H_\Omega(t-s,x,z)\chi_2(z)) Ce^{C|\Omega|}e^{-\epsilon^2/(cs)} dzds \\
            &\leq C'|\Omega|e^{-\epsilon^2/(ct)} + Ce^{C|\Omega|}e^{-\epsilon^2/(ct)} \bigg(\int_0^t \int_S H_S(s,x,z)\chi_1(z)dzds + \int_0^t \int_\Omega H_\Omega(s,x,z)\chi_2(z)dzds\bigg).
        \end{split}
    \end{equation}
    In the last step, we used that $\supp(\chi_1) \subset \Omega \cap S$ to change the region of integration from $\Omega$ to $S$ in the first term. Since
    \begin{equation*}
        \int_{D_j} H_{D_j}(s,x,z)\chi_j(z)dz
    \end{equation*}
    is the solution of the heat equation on $D_j$ with initial data $\chi_j(x)$, it approaches $\chi_j(x)$ as $s \downarrow 0$, and is therefore uniformly bounded for $0 < s < t$. Combining this with \eqref{eq:Sher_prop} and \eqref{eq:G*K_fresh} yields \eqref{eq:locality_principle_general}, completing the proof.
\end{proof}
\begin{remark} \label{remark:worse_exponent}
    In the literature (see e.g. \cite[Thm. 3.2.9, Thm. 5.5.6]{davies1989heat}), it is common to see heat kernel bounds of the following form: for any $\sigma > 0$, there is a $C > 0$ such that 
    \begin{equation*}
        |H_{D}(t,x,y)| \leq \frac{C}{t^{m}}e^{-|x-y|^2/((c+\sigma)t)}, \,\, x,y \in D, \,\, 0 < t < 1, 
    \end{equation*}
    for some $c,m > 0$, and similarly for the gradient. In particular, the denominator in the exponent is not $c$, but can be chosen arbitrarily close to $c$. The proof of Lemma \ref{lemma:locality_principle_general} shows that \eqref{eq:locality_principle_general} still holds in that case, without any $\sigma$. Indeed, for such bounds on the heat kernel, we would obtain
    \begin{equation*}
        \begin{split}
            |[\Delta, \Tilde{\chi}_j(x)]H_{D_j}(t,x,y)\chi_j(y)| &\leq \frac{C}{t^{m}}  e^{-(2\epsilon')^2/((c+\sigma) t)} \\
            &= \frac{C}{t^{m}} e^{-(3c-\sigma)(\epsilon')^2/(c(c+\sigma)t)}e^{-(\epsilon')^2/(c t)}.
        \end{split}
    \end{equation*}
    As long as $\sigma < 3c$,
    \begin{equation*}
        \frac{C}{t^{m}} e^{-(3c-\sigma)(\epsilon')^2/(c(c+\sigma)t)}
    \end{equation*}
    is bounded for $0 < t < 1$, hence \eqref{eq:E_fresh_bound} still holds. Similarly, if $|y-z| \geq \epsilon + 2\epsilon'$,
    then
    \begin{align*}
        |E(s,z,y)| &\leq \frac{C}{s^m} e^{-(\epsilon + 2\epsilon')^2/((c+\sigma)s)} \\
        &= \frac{C}{s^m}e^{-(4c\epsilon\epsilon' + 4c(\epsilon')^2 - \sigma \epsilon^2)/(c(c+\sigma)s)}e^{-\epsilon^2/(cs)}.
    \end{align*}
    Since 
    \begin{equation*}
        \frac{C}{s^m}e^{-(4c\epsilon\epsilon' + 4c(\epsilon')^2 - \sigma \epsilon^2)/(c(c+\sigma)s)}
    \end{equation*}
    is bounded for small enough $\sigma > 0$, \eqref{eq:E_fresh_bound_with_epsilon} and \eqref{eq:GE_bound} hold. Thus, the proof of Lemma \ref{lemma:locality_principle_general} works out as before. It is also worth emphasizing that we do not assume that $\Omega \subset S$. 
\end{remark}

\subsection{Partitioning of the polygon and locality principles}
Throughout this subsection, we assume that the Laplace operator on every domain is equipped with the Neumann boundary condition. To apply Lemma \ref{lemma:locality_principle_general} and derive locality estimates, we partition the polygon $\Omega$ into regions that are locally modeled by simpler domains: the infinite sector $S_\gamma$, the Euclidean half-plane $\R^2_+$, and the Euclidean plane $\R^2$. This partitioning in inspired by the construction in \cite{van1988heat}. 

Let $\gamma, R$, and the corner neighborhoods $B_i(R)$ be defined as in \eqref{eq:def_gamma}, \eqref{eq:def_R}, and \eqref{eq:def_B_i}, respectively. Then define
\begin{align*}
    C(R,\gamma) &:= \{x \in \Omega : d(x,\partial\Omega) < 2R\sin(\gamma/2)/3
    , \,\, x \notin \cup_{i=1}^n B_i(R)\}, \\
    D(R,\gamma) &:= \{x \in \Omega : x \notin \cup_{i=1}^n B_i(R), \,\, x \notin C(R,\gamma)\}.
\end{align*}
The set $C(R,\gamma)$ contains points near the edges but away from the corners, and $D(R,\gamma)$ contains points in the interior away from the boundary. The reason for requiring $x \in C(R,\gamma)$ to have $d(x,\partial \Omega) < 2R\sin(\gamma/2)/3$ instead of, say, $d(x,\partial \Omega) < R$, is to guarantee that $C(R,\gamma)$ consists of $n$ connected components, one near each edge. See Figure \ref{fig:polygon_partition}. In \cite{van1988heat}, they choose $d(x,\partial \Omega) < R\sin(\gamma/2)/2$, but we make a slight modification to make sure we get the same estimate for the heat kernels in the Neumann case as they get in the Dirichlet case.

Together, the sets form a partition of the polygon:
\begin{equation*}
    \Omega = \bigsqcup_{i=1}^n B_i(R) \sqcup C(R,\gamma) \sqcup D(R,\gamma).
\end{equation*}
Each of the regions is an exact geometric match with one of the model domains, allowing us to apply Lemma \ref{lemma:locality_principle_general} in each case. 
\begin{figure}[hbt!] 
\centering
\begin{tikzpicture}[scale=1]
  \def\radius{6}
  \def\arcradius{1.5} 

  \coordinate (O) at (0,0);

  \foreach \i in {1,...,6} {
    \pgfmathtruncatemacro{\j}{\i-1}
    \coordinate (P\i) at ({\radius*cos(60*\j)}, {\radius*sin(60*\j)});
  }

  \draw[thick] (P1) -- (P2) -- (P3) -- (P4) -- (P5) -- (P6) -- cycle;

  \foreach \i in {1,...,6} {
    \fill (P\i) circle[radius=2pt];
    \path (O) -- (P\i) coordinate[pos=1.08] (Label\i);
    \node at (Label\i) {$P_{\i}$};
  }

  \foreach \i/\angle in {1/120, 2/180, 3/240, 4/300, 5/0, 6/60} {
    \path (P\i) ++(\angle:\arcradius) coordinate (startArc\i);
    \draw[thick] (startArc\i) arc[start angle=\angle, end angle={\angle+120}, radius=\arcradius];
  }



    \draw[thick] (-1.882,4.196)--(1.882,4.196);

    \draw[thick] (-1.882,-4.196)--(1.882,-4.196);

    \draw[thick] (-4.575,0.468)--(-2.693,3.728);

    \draw[thick] (4.575,0.468)--(2.693,3.728);

    \draw[thick] (4.575,-0.468)--(2.693,-3.728);

    \draw[thick] (-4.575,-0.468)--(-2.693,-3.728);

    \node at (5.2,0) {$B_1(R)$};
    \node at (2.6,4.6) {$B_2(R)$};
    \node at (-2.6,4.6) {$B_3(R)$};
    \node at (-5.2,0) {$B_4(R)$};
    \node at (-2.6,-4.6) {$B_5(R)$};
    \node at (2.6,-4.6) {$B_6(R)$};

    \node at (0,-4.7) {$C(R,\gamma)$};
    \node at (0,4.7) {$C(R,\gamma)$};
    \node[rotate=60] at (4,-2.5) {$C(R,\gamma)$};
    \node[rotate=60] at (-4,2.5) {$C(R,\gamma)$};
    \node[rotate=-60] at (-4,-2.5) {$C(R,\gamma)$};
    \node[rotate=-60] at (4,2.5) {$C(R,\gamma)$};
    
    \node at (0,0) {\scalebox{1.5}{$D(R,\gamma)$}};
    
\end{tikzpicture}
\caption{Illustration of the partition of a polygon $\Omega$ into subsets used in the locality principle: the corner neighborhoods $B_i(R)$ (circular sectors around vertices $P_i$), the edge neighborhoods $C(R,\gamma)$ (regions close to edges but away from corners), and the interior region $D(R,\gamma)$ (points away from the boundary).}
\label{fig:polygon_partition}
\end{figure}
\begin{lemma}[Locality principle at a corner] \label{lemma:locality_principle_corner}
    Let $P_i$ be a corner of the polygon $\Omega$ with angle $\gamma_i$. 
    Then there is a $C > 0$ such that
    \begin{equation} \label{eq:locality_principle_corner}
    |H_\Omega(t,x,x) - H_{\gamma_i}(t,x,x)| \leq Ce^{-R^2\sin^2(\gamma/2)/(16t)}, \,\, x \in B_i(R), \,\, 0 < t < 1.
\end{equation}
\end{lemma}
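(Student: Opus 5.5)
The plan is to apply Lemma \ref{lemma:locality_principle_general} with $\Omega_0 = B_i(R)$, $S = S_{\gamma_i}$, and with the constant in the Gaussian bounds \eqref{eq:H_S_bounds} equal to $c = 4$, in the weakened form of Remark \ref{remark:worse_exponent}. With $c=4$, the target exponent $R^2\sin^2(\gamma/2)/(16t)$ equals $\epsilon^2/(4t)$ with $\epsilon = R\sin(\gamma/2)/2$. So three things have to be checked:
\begin{enumerate}[label=(\roman*)]
\item Gaussian upper bounds for the Neumann heat kernels and their gradients on $\Omega$ and on $S_{\gamma_i}$, with exponent $|x-y|^2/((4+\sigma)t)$ for every small $\sigma>0$, for $0<t<1$;
\item $\Omega$ and $S_{\gamma_i}$ are exact geometric matches on an $(\epsilon+\delta)$-neighborhood of $B_i(R)$ for some $\delta>0$;
\item the resulting constant $C'$ can be taken uniform in $x \in B_i(R)$.
\end{enumerate}
Point (iii) is already built into the statement of the lemma.

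For (i), on the sector I will use scale invariance, $H_{\gamma_i}(t,x,y) = t^{-1}H_{\gamma_i}(1,x/\sqrt t,y/\sqrt t)$. Together with the classical representation of the Neumann sector kernel (Carslaw's contour integral, or the Kontorovich--Lebedev / Bessel series in the angular variable), this reduces the bound to a Gaussian estimate at $t=1$. Alternatively, since the sector is a Lipschitz domain with the doubling property, I can quote Davies-type Gaussian bounds for Neumann kernels \cite[Thm. 3.2.9, Thm. 5.5.6]{davies1989heat}, which give the exponent $4+\sigma$ for any $\sigma>0$. For the polygon, the same Davies-type bounds on Lipschitz domains give the kernel estimate. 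The gradient estimate I would obtain from local parabolic regularity (Cauchy estimates on parabolic cylinders of size $\sqrt t$), at the price of an extra power $t^{-m}$, which the lemma allows. Near the corner itself the gradient of $H_\Omega$ may be singular. However, in the proof of Lemma \ref{lemma:locality_principle_general} gradients are only evaluated where $\nabla\tilde\chi_j\neq 0$, and those points lie a positive distance from $P_i$. So I only need the gradient bound away from the vertices, and there it follows from reflection across the straight edges together with interior estimates.

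For (ii), which is the main geometric obstacle, I must show that every point of $\partial\Omega$ within distance $\epsilon+\delta$ of $B_i(R)$ lies on one of the two edges meeting at $P_i$. Equivalently, every other edge stays at distance at least $R\sin(\gamma/2)/2 + \delta$ from $B_i(R)$. I would argue from the definition \eqref{eq:def_R} of $R$: for $r$ slightly below $2R$, the sets $B_k(r)$ are pairwise disjoint and contained in $\Omega$. A nonadjacent edge entering the $\epsilon$-neighborhood of $B_i(R)$ would then meet $B_i(2R)$, contradicting $B_i(2R^-)\subset\Omega$. For the edge emanating from a neighboring vertex $P_j$, I would use that $|P_i-P_j|\ge 2R$, so $P_j$ sits at distance at least $R$ beyond the arc of $B_i(R)$. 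That edge leaves $P_j$ at interior angle $\gamma_j\ge\gamma$, and elementary trigonometry bounds its distance from $B_i(R)$ below by $R\sin(\gamma_j/2)\ge R\sin(\gamma/2)$, which is comfortably more than $\epsilon = R\sin(\gamma/2)/2$. Hence a positive $\delta$ remains, and I will take $\delta = R\sin(\gamma/2)/2$, say. This trigonometric step is where care is needed, in particular for reflex angles $\gamma_j>\pi$ and nonconvex polygons. I would handle it by a case analysis on the position of the nearest point of the offending edge relative to the sector $S_{\gamma_i}$.

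Once (i) and (ii) are in place, Lemma \ref{lemma:locality_principle_general} with Remark \ref{remark:worse_exponent} gives
\[
|H_\Omega(t,x,x)-H_{\gamma_i}(t,x,x)|\le C e^{-\epsilon^2/(4t)} = Ce^{-R^2\sin^2(\gamma/2)/(16t)}
\]
for $x\in B_i(R)$ and $0<t<1$, which is \eqref{eq:locality_principle_corner}.
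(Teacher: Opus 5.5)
Your overall route is the paper's: apply Lemma \ref{lemma:locality_principle_general} with $S=S_{\gamma_i}$, $c=4$ and $\epsilon=R\sin(\gamma/2)/2$. Two points of comparison come before the real issue.

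\textbf{Analytic side.} The paper quotes Gaussian bounds with exponent exactly $4$ (via self-improvement) and a global gradient bound (via Yan's theorem). You use $4+\sigma$ through Remark \ref{remark:worse_exponent}, and you need gradients only on the transition sets of $\tilde\chi_j$. Your localisation is actually the more accurate route. Near a reflex vertex the term $r^{\pi/\gamma_k}\cos(\pi\theta/\gamma_k)$ makes $|\nabla_x H|$ blow up like $r^{\pi/\gamma_k-1}$, so no bound of the form \eqref{eq:H_S_bounds} holds up to such a vertex.

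\textbf{Geometric side.} The paper simply takes $\epsilon+\delta=R$. For $x\in S_{\gamma_i}$ one has $d(x,B_i(R))=(|x-P_i|-R)_+$, so the $R$-neighbourhood of $B_i(R)$ meets $S_{\gamma_i}$ only inside $B_i(2R)\subset\Omega$. Your trigonometry for the edge at a neighbouring vertex is therefore unnecessary. It is also not valid as argued: for small $\gamma_j$ the line carrying that edge passes at distance $|P_i-P_j|\sin\gamma_j$ from $P_i$, and a bound like $|P_i-P_j|\ge 2R$ does not keep this away from $B_i(R)$. What keeps that edge away is $B_i(2R)\subset\Omega$.

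\textbf{The genuine gap.} The trouble is the part of $\Omega$ outside $S_{\gamma_i}$, which the definition of $R$ does not control. Your claim (ii) is false for some nonconvex polygons, so no case analysis can establish it. Take $\Omega=(0,10)^2\setminus\bigl([2,8]^2\cup([8,10]\times[4.95,5.05])\bigr)$, a C-shaped $12$-gon with angles $\pi/2$ and $3\pi/2$.
\begin{itemize}
\item The binding constraint in \eqref{eq:def_R} comes from the two edges of length $2$ beside the slot. Hence $R=1/2$, $\gamma=\pi/2$ and $\epsilon=\sqrt2/8>0.1$.
\item At $P_i=(8,4.95)$, the edge $[8,10]\times\{5.05\}$ across the slot lies at distance $0.1<\epsilon$ from $B_i(R)$.
\item So the $\epsilon$-neighbourhood of $B_i(R)$ (taken in $\Omega$, with Euclidean distance) contains points of $\Omega\setminus S_{\gamma_i}$.
\end{itemize}
Thus Lemma \ref{lemma:locality_principle_general} cannot be applied as stated, for any $\delta>0$.

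The paper's proof passes over the same point by asserting the match on $B_i(R+\epsilon+\delta)$. That set coincides with the required neighbourhood when $\Omega\cap\{|x-P_i|<2R\}\subset S_{\gamma_i}$ (e.g.\ for convex $\Omega$), but not in general.

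\textbf{How to close it.} Build $\chi_1,\tilde\chi_1,\tilde\chi_2$ radially in $S_{\gamma_i}$ and constant on $\Omega\setminus S_{\gamma_i}$. These are smooth on $\Omega$ because the straight sides of $B_i(2R)$ lie in $\partial\Omega$. Then every separation used in the proof of Lemma \ref{lemma:locality_principle_general} is radial inside the sector, with one exception: the term $[\Delta,\tilde\chi_2]H_\Omega\chi_2$ of $E$, where $y$ may sit just across the slot. That term is controlled by Gaussian bounds for $H_\Omega$ and $\nabla_x H_\Omega$ in the intrinsic (path) distance of $\Omega$. Any path in $\Omega$ from the transition set of $\tilde\chi_2$ to $\Omega\setminus B_i(R)$ must cross the arc $|x-P_i|=R$, so its length is at least $2\epsilon'$.
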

\begin{proof}
    We need to verify that $\Omega$ and $S_{\gamma_i}$ satisfy the bounds \eqref{eq:H_S_bounds}. By \cite[Thm. 2.4.4]{davies1989heat}, we have the diagonal estimates
    \begin{equation*}
        |H_{D_j}(t,x,x)| \leq \frac{C}{t}, \,\, x \in D_j, \,\, 0 < t < 1,
    \end{equation*}
    for some $C > 0$. Here, $D_1 = \Omega$ and $D_2 = S_{\gamma_i}$. Moreover, as noted in \cite[p. 252]{nursultanov2019hear}, we may apply \cite[Thm. 1.1]{yan2010gradient} to $D_j$ for short time, which gives   
    \begin{equation*}
        |\nabla H_{D_j}(t,x,x)| \leq \frac{C}{t^{3/2}}, \,\, x \in D_j, \,\, 0 < t < 1.
    \end{equation*}
    Then, as explained in e.g. \cite[Thm. 1.1]{yan2010gradient} and \cite[Thm. 4.1, Thm. 4.9]{coulhon2006gaussian}, off-diagonal estimates of the form
    \begin{align*}
        |H_{D_j}(t,x,y)| \leq \frac{C}{t^m}e^{-|x-y|^2/(4t)}, \,\, x,y \in \Omega \cap D_j, \,\, 0 < t < 1, \\
        |\nabla H_{D_j}(t,x,y)| \leq \frac{C}{t^m}e^{-|x-y|^2/(4t)}, \,\, x,y \in \Omega \cap D_j, \,\, 0 < t < 1, 
    \end{align*}
    follow from the self-improvement property, combined with boundedness of $\Omega$. Now let $\epsilon,\delta > 0$ be such that $\Omega$ and $S_{\gamma_i}$ are exact geometric matches on $B_i(R+\epsilon+\delta)$. Then Lemma \ref{lemma:locality_principle_general} gives
    \begin{equation} \label{eq:locality_principle_corner_epsilon}
        |H_\Omega(t,x,x) - H_{\gamma_i}(t,x,x)| \leq Ce^{-\epsilon^2/(4t)}, \,\, x \in B_i(R), \,\, 0 < t < 1,
    \end{equation}
    for some $C > 0$. By definition of $R$, we may let $\epsilon + \delta = R$. Choosing 
    \begin{align*}
        \epsilon &= R\sin(\gamma/2)/2, \\
        \delta &= R - R\sin(\gamma/2)/2
    \end{align*}
    gives the desired bound.
    
\end{proof}

\begin{lemma}[Locality principle at an edge] \label{lemma:locality_principle_edge}
    Let $\Omega$ be a polygon. There is a $C > 0$ such that
    \begin{equation} \label{eq:locality_principle_edge}
    |H_\Omega(t,x,x) - H_{\R^2_+}(t,x,x)| \leq Ce^{-R^2\sin^2(\gamma/2)/(16t)}, \,\, x \in C(R,\gamma), \,\, 0 < t < 1. 
    \end{equation}
\end{lemma}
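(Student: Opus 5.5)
The plan is to mirror the proof of Lemma \ref{lemma:locality_principle_corner}, now with $D_2 = \R^2_+$. Since $C(R,\gamma)$ has $n$ connected components, one near each edge, it suffices to treat one component $C_j$, lying along the edge $e_j$. For that component we place coordinates so that $e_j \subset \partial\R^2_+$ and $\Omega$ lies locally in $\R^2_+$. The constant $C$ for the lemma is then the maximum of the $n$ constants obtained.

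\emph{Step 1: heat kernel bounds.} We need the bounds \eqref{eq:H_S_bounds} with $c = 4$ for $D_1 = \Omega$ and $D_2 = \R^2_+$. For $\Omega$ this was already established in the proof of Lemma \ref{lemma:locality_principle_corner}. For $\R^2_+$ the kernel is explicit by \eqref{eq:heat_kernel_R^2_+}. Since $|x - y^*| \geq |x-y|$ for $x,y \in \R^2_+$, we get
\begin{equation*}
|H_{\R^2_+}(t,x,y)| \leq \frac{1}{2\pi t}e^{-|x-y|^2/(4t)}.
\end{equation*}
Differentiating \eqref{eq:heat_kernel_R^2} gives
\begin{equation*}
|\nabla H_{\R^2_+}(t,x,y)| \leq \frac{C}{t^{2}}e^{-|x-y|^2/(4t)},
\end{equation*}
where the polynomial factor $|x-y|/t$ is absorbed using boundedness of $\Omega$ (or by slightly worsening the constant, as permitted by Remark \ref{remark:worse_exponent}). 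So the hypotheses of Lemma \ref{lemma:locality_principle_general} hold with $m = 2$ and $c = 4$.

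\emph{Step 2: the geometric matching (main obstacle).} Take $\Omega_0 = C_j$. We must find $\epsilon, \delta > 0$ with $\epsilon \geq R\sin(\gamma/2)/2$ such that $\Omega$ and $\R^2_+$ are exact geometric matches on the $(\epsilon+\delta)$-neighborhood of $C_j$. Concretely, that neighborhood must meet $\partial\Omega$ only in the interior of $e_j$, and must not reach any other edge.

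A point of $C_j$ is at distance at least $R$ from every vertex, and its distance to $\partial\Omega$ is less than $2R\sin(\gamma/2)/3$. The key estimate is that a point at distance $\geq R$ from $P_i$ and within distance $\rho$ of $e_j$ lies at distance roughly $\geq R\sin(\gamma/2) - \rho$ from any other edge. This comes from the sector geometry near the corner, where the angle bisector argument gives the factor $\sin(\gamma/2)$, together with the definition \eqref{eq:def_R} of $R$ for edges far apart. Hence the distance from $C_j$ to $\partial\Omega \setminus e_j$ is at least about $R\sin(\gamma/2) - 2R\sin(\gamma/2)/3 = R\sin(\gamma/2)/3$.

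This is not enough for $\epsilon = R\sin(\gamma/2)/2$ if measured naively. So I would instead exploit the structure of the proof of Lemma \ref{lemma:locality_principle_general}. There, $\epsilon$ enters only through the distance between $\Omega_0$, where the diagonal point sits, and the transition region of $\tilde{\chi}_1$. The exponent is governed by the distance from $x$ to the part of $\partial\Omega$ not matched by $\partial\R^2_+$.

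The plan is therefore to choose $\epsilon = R\sin(\gamma/2)/2$ and verify carefully, using the factor $2/3$ in the definition of $C(R,\gamma)$, that every point within $\epsilon + \delta$ of $C_j$ (for some small $\delta > 0$) still only sees the edge $e_j$. This works because the $(\epsilon+\delta)$-neighborhood may enter $B_i(R)$, as long as it stays away from the other edge at $P_i$. A point near $e_j$ and at distance $\geq R - \epsilon - \delta$ from $P_i$ has distance to the adjacent edge at least about $(R-\epsilon-\delta)\sin\gamma_i$ minus its height. I expect this trigonometric bookkeeping, including the obtuse-angle case $\gamma_i > \pi/2$, where one uses distance to the vertex instead, to be the main work. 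The constant $2/3$ is precisely what makes it close.

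\emph{Step 3: conclusion.} With this choice, Lemma \ref{lemma:locality_principle_general} yields
\begin{equation*}
|H_\Omega(t,x,x) - H_{\R^2_+}(t,x,x)| \leq C e^{-\epsilon^2/(4t)} = C e^{-R^2\sin^2(\gamma/2)/(16t)}
\end{equation*}
for $x \in C_j$ and $0<t<1$. Taking the maximum of the constants over the $n$ components proves \eqref{eq:locality_principle_edge}.
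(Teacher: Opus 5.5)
Your Steps 1 and 3 are the paper's proof. You get Gaussian bounds for $H_{\R^2_+}$ and $\nabla H_{\R^2_+}$ from $|x-y^*|\ge|x-y|$ and boundedness of $\Omega$, quote the bounds for $H_\Omega$ from the corner lemma, and then apply Lemma~\ref{lemma:locality_principle_general} with $c=4$ and $\epsilon=R\sin(\gamma/2)/2$.

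The gap is Step~2, the only place where anything has to be checked. There your argument is unfinished and contradicts itself. You estimate $d(C_j,\partial\Omega\setminus e_j)\gtrsim R\sin(\gamma/2)/3<\epsilon$, and then propose to get around this by exploiting the structure of Lemma~\ref{lemma:locality_principle_general}. There is no slack there. That proof uses $H_{\R^2_+}$ on all of $\supp(\tilde\chi_1)$, whose transition region lies at distance between $\epsilon+2\epsilon'$ and $\epsilon+\delta$ from $\Omega_0$. So it needs the exact match on the whole $(\epsilon+\delta)$-neighbourhood, and in particular $d(C_j,\partial\Omega\setminus e_j)>\epsilon+\delta$. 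If your bound were sharp, the verification you defer would be impossible. The bookkeeping you sketch is also too lossy: bounding $|z-P_i|\ge R-\epsilon-\delta$ and the height of $z$ separately for $z$ in the neighbourhood already gives a negative lower bound for the square.

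What closes the gap is a sharper estimate on $C_j$ itself, with no height subtracted.
\begin{itemize}
\item Write $x\in C_j$ near $P_i$ as $x=P_i+r(\cos\theta,\sin\theta)$, with $\theta$ measured from $e_j$. Since $r\ge R$ and $d(x,e_j)=r\sin\theta<\frac{2}{3}R\sin(\gamma/2)$, you get $\theta<\gamma/2\le\gamma_i/2$.
\item Hence the angle between $x-P_i$ and the other edge $e'$ at $P_i$ is $\gamma_i-\theta>\gamma_i/2\ge\gamma/2$.
\item If $\gamma_i-\theta\le\pi/2$, then $d(x,e')\ge r\sin(\gamma_i-\theta)>R\sin(\gamma/2)$.
\item If the angle between $x-P_i$ and $e'$, measured the short way, is at least $\pi/2$, then $d(x,e')\ge r\ge R$.
\end{itemize}
These two cases cover every $\gamma_i\le 3\pi/2$. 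So $d(C_j,e')\ge R\sin(\gamma/2)=2\epsilon$, and any $\delta\in(0,\epsilon)$ works. It is the bound $\theta<\gamma/2$ that does the job, not the particular factor $2/3$.

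You are right that this distance is below $R$. The paper simply asserts $d(C(R,\gamma),\partial\Omega\setminus e)>R$ and sets $\epsilon+\delta=R$, which is false as stated: for the square, the distance from $C_j$ to an adjacent edge is $\frac{\sqrt{7}}{3}R$. This is harmless, since only a radius slightly above $\epsilon$ is needed.

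Finally, neither your plan nor the paper's argument, as written, covers two situations:
\begin{itemize}
\item a vertex with $\gamma_i>3\pi/2$, where points of $C_j$ with $|x-P_i|\approx R$ lie only about $R\sin(2\pi-\gamma_i)$ from $e'$ across the exterior wedge, possibly less than $\epsilon$;
\item non-adjacent edges separated by a thin exterior wall, whose separation $R$ does not control.
\end{itemize}
In both cases the Euclidean exact-match hypothesis of Lemma~\ref{lemma:locality_principle_general} genuinely fails. One would have to rerun that lemma with cut-offs and Gaussian bounds in the intrinsic distance of $\Omega$.
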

\begin{proof}
    Fix an edge $e$ of $\Omega$. Choose coordinates so that $x = 0$ at the midpoint of $e$, $y = 0$ on $e$, and $y$ increases as you move from $e$ towards the interior of the polygon. Then, by \eqref{eq:heat_kernel_R^2_+},
    \begin{align*}
        H_{\R^2_+}(t,x,y) &= \frac{e^{-|x-y|^2/(4t)}+e^{-|x-y^*|^2/(4t)}}{4\pi t}, \\
        |\nabla H_{\R^2_+}(t,x,y)| &\leq \frac{C\sqrt{|x|^2+|y|^2}}{t}H_{\R^2_+}(t,x,y),
    \end{align*}
    for some $C > 0$. Here, $y^* = (y_1,-y_2)$ is the reflection of $y = (y_1,y_2)$ across the edge of the polygon. Since 
    \begin{equation*}
        |x-y| \leq |x-y^*|, \,\, x,y \in \Omega \cap \R^2_+,
    \end{equation*}
    and $\Omega$ is bounded, there are $C_1,C_2 > 0$ such that
    \begin{align*}
        |H_{\R^2_+}(t,x,y)| &\leq \frac{C_1}{t} e^{-|x-y|^2/(4t)}, \,\, x,y \in \Omega \cap \R^2_+, \,\, 0< t < 1, \\
        |\nabla H_{\R^2_+}(t,x,y)| &\leq \frac{C_2}{t^2} e^{-|x-y|^2/(4t)}, \,\, x,y \in \Omega \cap \R^2_+, \,\, 0 < t < 1.
    \end{align*}
    As explained in the proof of Lemma \ref{lemma:locality_principle_corner}, analogous bounds hold for $H_\Omega(t,x,y)$. Thus, for $\epsilon, \delta > 0$ such that $\Omega$ and $\R^2_+$ are exact geometric matches on an $(\epsilon+\delta)$-neighborhood of $C(R,\gamma)$, Lemma \ref{lemma:locality_principle_general} gives that there is a $C > 0$ such that
    \begin{equation*}
        |H_\Omega(t,x,x) - H_{\R^2_+}(t,x,x)| \leq Ce^{-\epsilon^2/(4t)}, \,\, x \in C(R,\gamma), \,\, 0 < t < 1.
    \end{equation*}
    Since $d(C(R,\gamma),\partial\Omega \backslash e) > R$, we may let $\epsilon + \delta = R$.
    Choosing 
    \begin{align*}
        \epsilon &= R\sin(\gamma/2)/2, \\
        \delta &= R - R\sin(\gamma/2)/2
    \end{align*}
    gives the desired bound.
\end{proof}

\begin{lemma}[Locality principle at the interior] \label{lemma:locality_principle_interior}
    Let $\Omega$ be a polygon. There is a $C > 0$ such that
    \begin{equation} \label{eq:locality_principle_interior}
    |H_\Omega(t,x,x) - H_{\R^2}(t,x,x)| \leq Ce^{-R^2\sin^2(\gamma/2)/(16t)}, \,\, x \in D(R,\gamma), \,\, 0 < t < 1.
\end{equation}
\end{lemma}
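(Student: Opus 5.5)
The plan is to follow the same template as Lemmas \ref{lemma:locality_principle_corner} and \ref{lemma:locality_principle_edge}: verify the Gaussian bounds \eqref{eq:H_S_bounds} for $D_1 = \Omega$ and $D_2 = \R^2$, then apply Lemma \ref{lemma:locality_principle_general} with $\Omega_0 = D(R,\gamma)$ and $S = \R^2$, choosing $\epsilon$ and $\delta$ so that the resulting exponent is $R^2\sin^2(\gamma/2)/(16t)$.

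\textbf{Step 1: the Gaussian bounds.} For $\R^2$ these are explicit. By \eqref{eq:heat_kernel_R^2},
\begin{equation*}
|H_{\R^2}(t,x,y)| \leq \frac{1}{4\pi t}e^{-|x-y|^2/(4t)}, \qquad |\nabla H_{\R^2}(t,x,y)| = \frac{|x-y|}{2t}H_{\R^2}(t,x,y).
\end{equation*}
Since $\Omega$ is bounded, $|x-y|$ is bounded for $x,y\in\Omega$. Hence both quantities are at most $Ct^{-2}e^{-|x-y|^2/(4t)}$ on $\Omega$ for $0<t<1$. For $H_\Omega$, the required bounds were already established in the proof of Lemma \ref{lemma:locality_principle_corner}, via Davies, Yan and Coulhon--Sikora, with $c = 4$. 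Thus \eqref{eq:H_S_bounds} holds with $c=4$ and $m=2$.

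\textbf{Step 2: the matching region.} Here $\partial\R^2 = \emptyset$, so $\Omega$ and $\R^2$ are exact geometric matches on a neighborhood $U$ of $D(R,\gamma)$ exactly when $\partial U\cap\partial\Omega=\emptyset$. It therefore suffices that the $(\epsilon+\delta)$-neighborhood of $D(R,\gamma)$ stays at positive distance from $\partial\Omega$. This requires a lower bound on $d(D(R,\gamma),\partial\Omega)$, and this is the step I expect to be the main obstacle.

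A point of $D(R,\gamma)$ lies outside every $B_i(R)$ and outside $C(R,\gamma)$. By the definition of $C(R,\gamma)$, such a point satisfies $d(x,\partial\Omega)\ge 2R\sin(\gamma/2)/3$. One subtlety is the portion of $\Omega\setminus\bigcup_i B_i(R)$ near a corner but outside the disc of radius $R$. There one checks that the distance to the two adjacent edges is at least $R\sin(\gamma_i/2)\ge R\sin(\gamma/2)$ when $\gamma_i\le\pi$. For reflex angles the distance is comparable to the distance to $P_i$, which is at least $R$. In either case the definition of $C(R,\gamma)$ already forces
\begin{equation*}
d(D(R,\gamma),\partial\Omega)\ge \frac{2R\sin(\gamma/2)}{3}.
\end{equation*}
I would verify this carefully, since it is the geometric input that fixes the constant.

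\textbf{Step 3: choosing the parameters.} Take $\epsilon = R\sin(\gamma/2)/2$ and $\delta = R\sin(\gamma/2)/12$. Then
\begin{equation*}
\epsilon+\delta = \frac{7R\sin(\gamma/2)}{12} < \frac{2R\sin(\gamma/2)}{3},
\end{equation*}
so the $(\epsilon+\delta)$-neighborhood of $D(R,\gamma)$ avoids $\partial\Omega$, and the matching condition of Step 2 holds. Lemma \ref{lemma:locality_principle_general}, applied with $c=4$, then gives
\begin{equation*}
|H_\Omega(t,x,x)-H_{\R^2}(t,x,x)| \le Ce^{-\epsilon^2/(4t)} = Ce^{-R^2\sin^2(\gamma/2)/(16t)}, \qquad x\in D(R,\gamma),\ 0<t<1.
\end{equation*}
This is \eqref{eq:locality_principle_interior}. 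The margin between $7/12$ and $2/3$ is presumably why the author chose $2/3$ rather than $1/2$ in the definition of $C(R,\gamma)$.
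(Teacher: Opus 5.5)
Your proposal is correct and follows the paper's proof essentially verbatim: the same explicit Gaussian bounds for $H_{\R^2}$ with $c=4$, the bounds for $H_\Omega$ taken from Lemma \ref{lemma:locality_principle_corner}, and Lemma \ref{lemma:locality_principle_general} with $\epsilon = R\sin(\gamma/2)/2$; your explicit $\delta = R\sin(\gamma/2)/12$, which gives $\epsilon+\delta < 2R\sin(\gamma/2)/3$, also makes precise a point the paper leaves implicit. The step you flag as the main obstacle follows directly from the definitions (any $x \in D(R,\gamma)$ lies outside $\bigcup_i B_i(R)$ and outside $C(R,\gamma)$, hence $d(x,\partial\Omega) \geq 2R\sin(\gamma/2)/3$), so you can drop the corner digression, whose claim about distances to the adjacent edges is false for points outside $B_i(R)$ that lie near an edge.
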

\begin{proof}
    By \eqref{eq:heat_kernel_R^2}, we have
    \begin{align*}
        H_{\R^2}(t,x,y) &= \frac{1}{4\pi t}e^{-|x-y|^2/(4t)}, \\
        \nabla H_{\R^2}(t,x,y) &= -\frac{x-y}{2t}H_{\R^2}(t,x,y).
    \end{align*}
    In particular, there are $C_1, C_2 > 0$ such that
    \begin{align*}
        |H_{\R^2}(t,x,y)| &= \frac{C_1}{t}e^{-|x-y|^2/(4t)}, \,\, x,y \in \Omega, \,\, 0 < t < 1,\\
        |\nabla H_{\R^2}(t,x,y)| &= \frac{C_2}{t^2}e^{-|x-y|^2/(4t)}, \,\, x,y \in \Omega, \,\, 0 < t < 1.
    \end{align*}
    Since we have analogous bounds for $H_\Omega(t,x,y)$ by the proof of Lemma \ref{lemma:locality_principle_corner}, Lemma \ref{lemma:locality_principle_general} gives that there is a $C > 0$ such that
    \begin{equation*}
        |H_\Omega(t,x,x) - H_{\R^2}(t,x,x)| \leq Ce^{-\epsilon^2/(4t)}, \,\, x \in D(R,\gamma), \,\, 0 < t < 1,
    \end{equation*}
    where $\epsilon,\delta > 0$ are such that $\Omega$ and $\R^2$ are exact geometric matches on an $(\epsilon+\delta)$-neighborhood of $D(R,\gamma)$. In particular, $\epsilon$ must be chosen so that an $\epsilon$-neighborhood of $D(R,\gamma)$ has a positive distance to $\partial\Omega$. 
    Since $d(D(R),\partial \Omega) \geq 2R\sin(\gamma/2)/3$, we may let 
    \begin{align*}
        \epsilon &= R\sin(\gamma/2)/2,
    \end{align*}
    which gives the desired bound.
\end{proof}
\section{Heat trace} \label{sec:heat_trace}
Having established locality estimates for the Neumann heat kernel $H_\Omega(t,x,x)$ on different regions of the polygon, we now turn to the proof of Theorem \ref{thm:main_result}. The key idea is to approximate $H_\Omega(t,x,x)$ by the corresponding model heat kernels on each subregion of $\Omega$, using Lemmas \ref{lemma:locality_principle_corner}-\ref{lemma:locality_principle_interior}, and then integrate the model heat kernels over each subregion. Applying these lemmas region by region yields
\begin{align} \label{eq:heat_trace_estimate}
    \begin{split}
        &\left|Z(t) - \left(\sum_{i=1}^n \int_{B_i(R)} H_{\gamma_i}(t,x,x) dx + \int_{C(R,\gamma)} H_{\R^2_+}(t,x,x) dx + \int_{D(R,\gamma)} H_{\R^2}(t,x,x) dx\right)\right| \\
        &\leq \sum_{i=1}^n \int_{B_i(R)} |H_\Omega(t,x,x)-H_{\gamma_i}(t,x,x)|dx + \int_{C(R,\gamma)} |H_\Omega(t,x,x)-H_{\R^2_+}(t,x,x)|dx \\ 
        &+ \int_{D(R,\gamma)} |H_\Omega(t,x,x)-H_{\R^2}(t,x,x)|dx \\
        &\leq Ce^{-R^2\sin^2(\gamma/2)/(16t)}\sum_{i=1}^n|B_i(R)| + Ce^{-R^2\sin^2(\gamma/2)/(16t)}|C(R,\gamma)| + Ce^{-R^2\sin^2(\gamma/2)/(16t)}|D(R,\gamma)| \\
        &= C|\Omega|e^{-R^2\sin^2(\gamma/2)/(16t)}, \,\, 0 < t < 1.
    \end{split}
\end{align}
Next, we compute the contributions from each model heat kernel explicitly, following the approach of \cite{van1988heat}.

\subsection{Heat trace contribution from a corner}
In polar coordinates, we have 
\begin{equation*}
    H_\gamma(t,r,\theta,r',\theta') = \mathcal{L}^{-1}(G_\gamma(s,r,\theta,r',\theta'))(t),
\end{equation*}
where $\mathcal{L}^{-1}$ is the inverse Laplace transform and $G_\gamma(s,r,\theta,r',\theta')$ is the Green's function for the infinite sector. By \cite[Eq. (2.3)]{nursultanov2024heat}, it is given by
\begin{equation*}
    G_\gamma(s,r,\theta,r',\theta') = \frac{A+B+C}{\pi^2}
\end{equation*}
with 
\begin{align*}
    A &= \int_0^\infty K_{ix}(r\sqrt{s})K_{ix}(r'\sqrt{s})\cosh((\pi-|\theta'-\theta|)x) dx,\\
    B &= \int_0^\infty K_{ix}(r\sqrt{s})K_{ix}(r'\sqrt{s})\frac{\sinh(\pi x)}{\sinh(\gamma x)}\cosh((\theta+\theta'-\gamma)x) dx,\\
    C &= \int_0^\infty K_{ix}(r\sqrt{s})K_{ix}(r'\sqrt{s})\frac{\sinh((\pi-\gamma)x)}{\sinh(\gamma x)}\cosh((\theta-\theta')x) dx.
\end{align*}
Here, $K_\nu$ are the modified Bessel functions of the second kind. For the Dirichlet boundary condition, the Green's function is instead given by $G_\gamma(s,r,\theta,r',\theta') = \frac{A-B+C}{\pi^2}$. 
In \cite[Thm. 2]{van1988heat}, they compute
\begin{align*}
    \frac{1}{\pi^2}\int_0^{\gamma} \int_0^R \mathcal{L}^{-1}(A)rdrd\theta &= \frac{\gamma R^2}{8\pi t}, \\
     \frac{1}{\pi^2}\int_0^{\gamma} \int_0^R \mathcal{L}^{-1}(B)rdrd\theta &= \frac{R^2}{2\pi t} \int_0^1 \frac{\sqrt{1-y^2}}{e^{R^2y^2/t}} dy ,\\
      \frac{1}{\pi^2}\int_0^{\gamma} \int_0^R \mathcal{L}^{-1}(C)rdrd\theta &= \frac{\pi^2 - \gamma^2}{24\pi\gamma} + A_\gamma(t),
\end{align*}
where 
\begin{align*}
    A_\gamma(t) =
    \begin{dcases}
        \frac{1}{4\pi} \sin\!\biggl(\frac{\pi^2}{\gamma}\biggr) 
        \int_0^\infty \frac{e^{-R^2(1+\cosh(q))/(2t)}}{(1+\cosh(q))(\cosh(\pi q/\gamma) - \cos(\pi^2/\gamma))} \, dq, 
        & \frac{\pi}{2} < \gamma \leq 2\pi, \\[2ex]
        -\frac{\gamma}{2\pi} \sum_{n=1}^{M-1} 
        \frac{e^{-R^2(1-\cos(2n\gamma))/(2t)}}{1 - \cos(2n\gamma)} 
        - \frac{\gamma}{8\pi}e^{-R^2/t}, 
        & \gamma = \frac{\pi}{2M}, \, M = 1,2,\dots, \\[2ex]
        -\frac{\gamma}{2\pi} \sum_{n=1}^{M} 
        \frac{e^{-R^2(1-\cos(2n\gamma))/(2t)}}{1 - \cos(2n\gamma)} \\
        + \frac{1}{4\pi} \sin\!\biggl(\frac{\pi^2}{\gamma}\biggr) 
        \int_0^\infty \frac{e^{-R^2(1+\cosh(q))/(2t)}}{(1+\cosh(q))(\cosh(\pi q/\gamma) - \cos(\pi^2/\gamma))} \, dq,
        & \frac{\pi}{2M+2} < \gamma < \frac{\pi}{2M}, \, M = 1,2,\dots.
    \end{dcases}
\end{align*}
In particular, we have by \cite[Cor. 3]{van1988heat}
\begin{equation} \label{eq:A(t)}
    |A_\gamma(t)| \leq \bigg(\frac{\gamma}{8\pi} + \frac{3\pi^2}{64\gamma^2}\bigg) e^{-R^2\sin^2(\gamma)/t}, \,\, t > 0.
\end{equation}
Thus,
\begin{align} \label{eq:heat_trace_contribution_corner}
    \begin{split}
        \int_{B_i(R)} H_{\gamma_i}(t,r,\theta,r,\theta)rdrd\theta &= \frac{1}{\pi^2}\int_0^{\gamma} \int_0^R (\mathcal{L}^{-1}(A) + \mathcal{L}^{-1}(B) + \mathcal{L}^{-1}(C))rdrd\theta \\
        &= \frac{\gamma_i R^2}{8\pi t} + \frac{R^2}{2\pi t} \int_0^1 \frac{\sqrt{1-y^2}}{e^{R^2y^2/t}} dy + \frac{\pi^2 - \gamma_i^2}{24\pi\gamma_i} + A_{\gamma_i}(t).
    \end{split}
\end{align}

\subsection{Heat trace contribution from the edges} \label{S:contribution_to_edges_Neumann}
From \eqref{eq:heat_kernel_R^2_+}, it follows that
\begin{align*}
    \int_{C(R,\gamma)} H_{\R_+^2}(t,x,x)dx &= \int_{C(R,\gamma)} \frac{1+e^{-d(y,\partial\Omega)^2/t}}{4\pi t} dy \\
    &= \frac{|C(R,\gamma)|}{4\pi t} + \frac{1}{4\pi t}\int_{C(R,\gamma)} e^{-d(y,\partial\Omega)^2/t} dy.
\end{align*}
Since $C(R,\gamma)$ consists of $n$ connected components, one near each edge of $\Omega$, we have
\begin{equation*}
    \int_{C(R,\gamma)} e^{-d(y,\partial\Omega)^2/t} dy = \int_0^{2R\sin(\gamma/2)/3} e^{-y^2/t} L(y) dy,
\end{equation*}
where $L(y)$ is the sum of the lengths of the line segments with distance $y$ to $\partial\Omega$ contained in $C(R,\gamma)$. It is a geometric exercise to see that
\begin{equation*}
    L(y) = |\partial\Omega| - 2n\sqrt{R^2 - y^2}, \,\, 0 \leq y \leq \frac{2R\sin(\gamma/2)}{3}.
\end{equation*}
Hence
\begin{align*}
    \int_{C(R,\gamma)} e^{-d(y,\partial\Omega)^2/t} dy &= |\partial\Omega|\int_0^{2R\sin(\gamma/2)/3} e^{-y^2/t} dy - 2n \int_0^{2R\sin(\gamma/2)/3} \frac{\sqrt{R^2 - y^2}}{e^{y^2/t}} dy \\
    &= \frac{|\partial\Omega|\sqrt{\pi t}}{2} - |\partial\Omega|\int_{2R\sin(\gamma/2)/3}^\infty e^{-y^2/t}dy - 2nR^2\int_0^{2\sin(\gamma/2)/3} \frac{\sqrt{1-y^2}}{e^{R^2y^2/t}} dy.
\end{align*}
Thus,
\begin{align} \label{eq:heat_trace_contribution_edges}
    \begin{split}
        &\int_{C(R,\gamma)} H_{\R_+^2}(t,x,x)dx \\
        &= \frac{|C(R,\gamma)|}{4\pi t} + \frac{1}{4\pi t} \bigg(\frac{|\partial\Omega|\sqrt{\pi t}}{2} - |\partial\Omega|\int_{2R\sin(\gamma/2)/3}^\infty e^{-y^2/t}dy - 2nR^2\int_0^{2\sin(\gamma/2)/3} \frac{\sqrt{1-y^2}}{e^{R^2y^2/t}} dy\bigg) \\
        &= \frac{|C(R,\gamma)|}{4\pi t} + \frac{|\partial\Omega|}{8\sqrt{\pi t}} - \frac{|\partial\Omega|}{4\pi t}\int_{2R\sin(\gamma/2)/3}^\infty e^{-y^2/t}dy - \frac{nR^2}{2\pi t}\int_0^{2\sin(\gamma/2)/3} \frac{\sqrt{1-y^2}}{e^{R^2y^2/t}} dy.
    \end{split}
\end{align}

\subsection{Heat trace contribution from the interior}
The contribution from the interior is simply
\begin{equation} \label{eq:heat_trace_contribution_interior}
    \int_{D(R,\gamma)} H_{\R^2}(t,x,x) dx = \int_{D(R,\gamma)} \frac{1}{4\pi t} dx = \frac{|D(R,\gamma)|}{4\pi t}.
\end{equation}

\subsection{Proof of Theorem \ref{thm:main_result}}
We now insert \eqref{eq:heat_trace_contribution_corner}, \eqref{eq:heat_trace_contribution_edges}, \eqref{eq:heat_trace_contribution_interior} into \eqref{eq:heat_trace_estimate} to obtain
\begin{align*}
    \bigg|Z(t) &- \sum_{i=1}^n\left[\frac{\gamma_i R^2}{8\pi t} + \frac{R^2}{2\pi t} \int_0^1 \frac{\sqrt{1-y^2}}{e^{R^2y^2/t}} dy + \frac{\pi^2 - \gamma_i^2}{24\pi\gamma_i} + A_{\gamma_i}(t)\right] \\
    &- \left[\frac{|C(R,\gamma)|}{4\pi t} + \frac{|\partial\Omega|}{8\sqrt{\pi t}} - \frac{|\partial\Omega|}{4\pi t}\int_{2R\sin(\gamma/2)/3}^\infty e^{-y^2/t}dy - \frac{nR^2}{2\pi t}\int_0^{2\sin(\gamma/2)/3} \frac{\sqrt{1-y^2}}{e^{R^2y^2/t}} dy\right] 
    - \frac{|D(R,\gamma)|}{4\pi t}\bigg| \\ 
    &\leq C|\Omega|e^{-R^2\sin^2(\gamma/2)/(16t)}, \,\, 0 < t < 1.
\end{align*}
Since
\begin{equation*}
    \sum_{i=1}^n \frac{\gamma_i R^2}{8\pi t} + \frac{|C(R,\gamma)|}{4\pi t} + \frac{|D(R,\gamma)|}{4\pi t} = \frac{|\Omega|}{4\pi t},
\end{equation*}
this simplifies to
\begin{align*}
    \bigg|Z(t) &- \frac{|\Omega|}{4\pi t} - \frac{|\partial\Omega|}{8\sqrt{\pi t}} - \sum_{i=1}^n \frac{\pi^2-\gamma_i^2}{24\pi\gamma_i} - \frac{nR^2}{2\pi t} \int_0^1 \frac{\sqrt{1-y^2}}{e^{R^2y^2/t}} dy - \sum_{i=1}^n  A_{\gamma_i}(t) \\
    &+ \frac{|\partial\Omega|}{4\pi t}\int_{2R\sin(\gamma/2)/3}^\infty e^{-y^2/t}dy + \frac{nR^2}{2\pi t}\int_0^{2\sin(\gamma/2)/3} \frac{\sqrt{1-y^2}}{e^{R^2y^2/t}} dy\bigg| 
     \leq C|\Omega|e^{-R^2\sin^2(\gamma/2)/(16t)}, \,\, 0 < t < 1.
\end{align*}
Since $\frac{2\sin(\gamma/2)}{3} < 1$, we simplify further into
\begin{align*}
    \bigg|Z(t) &- \frac{|\Omega|}{4\pi t} - \frac{|\partial\Omega|}{8\sqrt{\pi t}} - \sum_{i=1}^n \frac{\pi^2-\gamma_i^2}{24\pi\gamma_i} - \frac{nR^2}{2\pi t} \int_{2\sin(\gamma/2)/3}^1 \frac{\sqrt{1-y^2}}{e^{R^2y^2/t}} dy - \sum_{i=1}^n  A_{\gamma_i}(t) + \frac{|\partial\Omega|}{4\pi t}\int_{2R\sin(\gamma/2)/3}^\infty e^{-y^2/t}dy\bigg| \\
    & \leq C|\Omega|e^{-R^2\sin^2(\gamma/2)/(16t)}, \,\, 0 < t < 1.
\end{align*}
By the triangle inequality,
\begin{align*}
    &\bigg|Z(t) - \frac{|\Omega|}{4\pi t} - \frac{|\partial\Omega|}{8\sqrt{\pi t}} - \sum_{i=1}^n \frac{\pi^2-\gamma_i^2}{24\pi\gamma_i}\bigg| \\ 
    \leq &\frac{nR^2}{2\pi t} \int_{2\sin(\gamma/2)/3}^1 \frac{\sqrt{1-y^2}}{e^{R^2y^2/t}} dy + \sum_{i=1}^n |A_{\gamma_i}(t)| + \frac{|\partial\Omega|}{4\pi t}\int_{2R\sin(\gamma/2)/3}^\infty e^{-y^2/t}dy + C|\Omega|e^{-R^2\sin^2(\gamma/2)/(16t)}, \,\, 0 < t < 1.
\end{align*}
We estimate
\begin{align*}
    \int_{2\sin(\gamma/2)/3}^1 \frac{\sqrt{1-y^2}}{e^{R^2y^2/t}} dy &\leq \int_{2\sin(\gamma/2)/3}^1 e^{-R^2y^2/t} dy \leq e^{-4R^2\sin^2(\gamma/2)/(9t)}, \\ 
    \int_{2R\sin(\gamma/2)/3}^\infty e^{-y^2/t}dy &= 
    -\frac{t}{2y}e^{-y^2/t}\bigg|_{2R\sin(\gamma/2)/3}^\infty - \int_{2R\sin(\gamma/2)/3}^\infty \frac{t}{2y^2}e^{-y^2/t} dy \leq \frac{3t}{4R\sin(\gamma/2)}e^{-4R^2\sin^2(\gamma/2)/(9t)}.
\end{align*}
Moreover, \eqref{eq:A(t)} gives
\begin{align*}
    \sum_{i=1}^n |A_{\gamma_i}(t)| &\leq \sum_{i=1}^n \bigg(\frac{\gamma_i}{8\pi} + \frac{3\pi^2}{64\gamma_i^2}\bigg) e^{-R^2\sin^2(\gamma_i)/t} \\
    &\leq \frac{1}{8\pi}e^{-R^2\sin^2(\gamma)/t} \sum_{i=1}^n \gamma_i + \frac{3\pi^2}{64}e^{-R^2\sin^2(\gamma)/t} \sum_{i=1}^n \frac{1}{\gamma_i^2} \\
    &\leq \bigg(\frac{n-2}{8} + \frac{3\pi^2n}{64\gamma^2}\bigg)e^{-R^2\sin^2(\gamma)/t}.
\end{align*}
Hence, for $0 < t < 1$,
\begin{align*}
    \bigg|Z(t) - \frac{|\Omega|}{4\pi t} - \frac{|\partial\Omega|}{8\sqrt{\pi t}} - \sum_{i=1}^n \frac{\pi^2-\gamma_i^2}{24\pi\gamma_i}\bigg| &\leq \left(\frac{nR^2}{2\pi t} + \frac{3|\partial\Omega|}{16\pi R\sin(\gamma/2)}\right)e^{-4R^2\sin^2(\gamma/2)/(9t)} \\
    &+ \left(\frac{n-2}{8} + \frac{3\pi^2n}{64\gamma^2}\right) e^{-R^2\sin^2(\gamma)/t} 
    + C|\Omega|e^{-R^2\sin^2(\gamma/2)/(16t)} \\
    &\leq \left(\frac{nR^2}{2\pi t} + \frac{3|\partial\Omega|}{16\pi R\sin(\gamma/2)} + \frac{n-2}{8} + \frac{3\pi^2n}{64\gamma^2} + C|\Omega|\right)e^{-R^2\sin^2(\gamma/2)/(16t)}.
\end{align*}
In particular, there is a $C > 0$ such that
\begin{equation} \label{eq:main_result_small_time}
    \bigg|Z(t) - \frac{|\Omega|}{4\pi t} - \frac{|\partial\Omega|}{8\sqrt{\pi t}} - \sum_{i=1}^n \frac{\pi^2-\gamma_i^2}{24\pi\gamma_i}\bigg| \leq Ce^{-R^2\sin^2(\gamma/2)/(16t)}, \,\, 0 < t < 1.
\end{equation}
This proves Theorem \ref{thm:main_result} for $0 < t < 1$. For $t \geq 1$, note that $Z(t)$ is bounded. Indeed, by Weyl's law we have
\begin{equation*} 
    \lambda_k \sim \frac{4\pi k}{|\Omega|}, \,\, k \uparrow \infty,
\end{equation*}
which implies that there is a $C > 0$ such that
\begin{equation} \label{eq:Heat_trace_large_time}
    Z(t) \leq 1 + Ce^{-\lambda_1 t}, \,\, t \geq 1.
\end{equation}
Therefore,
    \begin{align*}
        \bigg|Z(t) - \frac{|\Omega|}{4\pi t} - \frac{|\partial\Omega|}{8\sqrt{\pi t}} - \sum_{i=1}^n \frac{\pi^2-\gamma_i^2}{24\pi\gamma_i}\bigg| \leq Z(t) + \frac{|\Omega|}{4\pi t} + \frac{|\partial\Omega|}{8\sqrt{\pi t}} + \sum_{i=1}^n \frac{\pi^2-\gamma_i^2}{24\pi\gamma_i}
    \end{align*}
    is also bounded for $t \geq 1$. 
    It follows that there is a $C > 0$ such that \eqref{eq:main_result_small_time} holds for $t \geq 1$, from which Theorem \ref{thm:main_result} follows. \qed

\section{Outlook} \label{sec:outlook}
In this section, we discuss several directions for future research. These include the problem of estimating the coefficient in front of the exponential error bound, finding the sharp exponent in the remainder term, and generalizing the results to Robin boundary conditions. Each of these directions raises its own set of difficulties, and we briefly discuss them below.

\subsection{Estimating the coefficient in front of the exponential}
In Theorem \ref{thm:main_result}, the remainder in the asymptotic expansion is of the form $Ce^{-\alpha/t}$. While we have obtained an explicit estimate for the exponent $\alpha$, the coefficient $C$ remains unspecified. It is therefore natural to ask whether one can obtain an upper bound for $C$. A central difficulty in this problem is to obtain control of the constants appearing in Gaussian estimates for the heat kernels and their gradients on the relevant domains. Indeed, the proofs of Lemmas \ref{lemma:locality_principle_general}-\ref{lemma:locality_principle_interior} rely on such estimates on model domains, including infinite sectors and half-planes. While Gaussian upper bounds are well known in these settings, extracting explicit constants appears to be non-trivial.
A further difficulty arises in extending the estimate to all $t > 0$, which requires an explicit bound on the heat trace $Z(t)$ for large $t$. While we have \eqref{eq:Heat_trace_large_time} by Weyl's law, obtaining explicit constants would require detailed information about the spectrum of the Neumann Laplacian on the polygon, which is typically not available in closed form. For the Dirichlet boundary condition, one could inscribe the polygon by a rectangle and apply the domain monotonicity property \cite[Thm. 6.20]{borthwick2020spectral}. However, such a domain monotonicity property does not hold in the Neumann case. In summary, while it is plausible that an explicit bound on $C$ could be obtained, this would require a systematic analysis of the constants in the underlying heat kernel estimates, which lies beyond the scope of the present work.

\subsection{Sharp exponent}
In this article, we obtained an explicit estimate for the exponential decay in the remainder term of the Neumann heat trace expansion of polygons. To the best of our knowledge, this is the first time such an estimate has been obtained. However, it is not believed to be sharp. In \cite{maardby2025spectral}, the precise exponential decay rate is determined for the so-called \textit{integrable polygons}, those whose interior angles are of the form $\pi/n$ with $n \in \N$. This class consists precisely of rectangles, equilateral triangles, isosceles right triangles, and hemi-equilateral triangles. In each of these cases, explicit formulas for the Dirichlet and Neumann spectrum are available, allowing for a detailed analysis of the heat trace asymptotics. The result shows that for any integrable polygon, the exponential decay rate in the remainder term of the Dirichlet and Neumann heat traces is given by $\frac{L^2}{4}$, where $L$ denotes the length of the shortest closed geodesic of the polygon. More precisely, the remainder decays like $\frac{1}{t}e^{-L^2/(4t)}$. This result suggests a natural conjecture: for any polygon, the remainder in the Dirichlet and Neumann heat traces decays like $\frac{1}{t}e^{-L^2/(4t)}$. 
However, the techniques developed in this article, based on local geometric approximations and heat kernel comparisons, do not appear sufficient to recover this sharp exponent in the general case. Achieving the sharp rate of decay likely requires a more global spectral analysis, capable of capturing the contribution of closed geodesics, as well as refined asymptotics for the eigenvalues. Developing such methods remains an open problem and would represent a significant advance in the understanding of spectral invariants of general polygonal domains.

\subsection{Robin boundary conditions}
It is natural to ask whether the methods developed in this article can be extended to treat the case of Robin boundary conditions. In \cite[Sec. 2.4]{nursultanov2019hear}, locality principles of the form \eqref{eq:weak_bound} are established. By following their argument, it is straightforward to verify that locality principles analogous to Lemma \ref{lemma:locality_principle_general} continue to hold for Robin heat kernels, once they are known in the Neumann case. However, the exponential decay is weaker in the Robin case than in the Neumann case. The main difficulty in extending Theorem \ref{thm:main_result} to Robin boundary conditions lies in computing the contribution of the model heat kernels to the heat trace. For the infinite sector $S_\gamma$, the Green's function for Robin boundary conditions can be obtained using the techniques in \cite[Appendix A]{nursultanov2024heat}, but computing its contribution to the heat trace appears to be a non-trivial task. Moreover, the heat kernel on the Euclidean half-plane with Robin boundary conditions contains an additional term compared to the Neumann heat kernel. This extra term contributes a non-exponentially decaying series to the heat trace. Specifically, it gives rise to an $\mathcal{O}(1)$ term, followed by terms of order $\sqrt{t}$, $t$, and so on; see \cite[Sec. 3.2]{nursultanov2019hear}. As a result, the heat trace expansion of polygons with Robin boundary conditions appears to contain infinitely many non-zero terms, in contrast to the three-term structure followed by exponential decay that holds in the Dirichlet and Neumann cases. To summarize, while the locality principles can be adapted to the Robin case, computing the full asymptotic expansion of the heat trace appears to be significantly more difficult.


\end{document}